% LaTeX Paper (Ukrainian) ********************************************************
\documentclass[12pt]{article}
\usepackage{amssymb,amsmath,amsthm}
\vfuzz2pt \textwidth 164mm %\textheight22.5cm
% THEOREMS -------------------------------------------------------
\newtheorem{theorem}{Theorem}%[section]
\newtheorem{corollary}{Corollary}
\newtheorem{lemma}{Lemma}
\newtheorem{proposition}{Proposition}
\theoremstyle{definition}

\theoremstyle{remark}

% ---------------------------------------------------
\begin{document}
\title{The Integral Representation of Solutions\\ to a Boundary Value Problem
on the Half-Line\\ for a System of Linear ODEs\\ with Singularity of
First Kind} \author{Yulia Horishna\thanks{National Taras Shevchenko
University of Kyiv, Volodymyrs'ka 64, Kyiv, 01033, Ukraine},\and
Igor Parasyuk\thanks{National Taras Shevchenko University of Kyiv,
Volodymyrs'ka 64, Kyiv, 01033, Ukraine}, \and  Lyudmyla
Protsak\thanks{National Pedagogical Dragomanov University, Pirogova
9, Kyiv, 01601, Ukraine}}
 \maketitle
\abstract{We consider a problem of finding vanishing at infinity
$C^1([0,\infty))$-solutions
 to non-homogeneous system of linear ODEs which
has the pole of first order at $x=0$. The resonant case where the
corresponding homogeneous problem has nontrivial solutions is of main
interest. Under the conditions that the homogeneous system is
exponentially dichotomic on $[1,\infty)$ and the residue of system's
operator at $x=0$ does not have eigenvalues with real part 1, we
construct the so-called generalized Green function. We also establish
conditions under which the main non-homogeneous problem can be
reduced to the Noetherian one with nonzero index.}

\section{Introduction}
 In the space $\mathbb R^n$ endowed with a scalar product
 $\langle \cdot,\cdot \rangle $ and the corres\-pond\-ing norm $\|\cdot \|$
 the following linear singular system is
 considered:
\begin{equation}\label{eq:l-s-sys}
  y'= \left(\frac{A}{x}+B(x)\right)y+\frac{a}{x}+f(x).
\end{equation}
Here $A\in \mathrm{Hom}(\mathbb R^n)$ is a linear operator, $a\in \mathbb R^n$ is a
constant vector, $B(\cdot):[0,\infty) \mapsto \mathrm{Hom}( \mathbb R^n)$ and $f(\cdot):[0,\infty)
\mapsto \mathbb R^n$ are continuous bounded mappings for which there
exists a constant $M>0$ such that $\|B(x)\|\leq M$ and $\|f(x)\|\leq
M$ for all $x\in [0,\infty)$. (The norm of a linear operator in $\mathbb
R^n$ is considered to be concordant with the norm in  $\mathbb R^n$.)

We seek a  solution $y(x)$ of the system (\ref{eq:l-s-sys}) which
satisfies the follow\-ing conditions:
\begin{gather}\label{eq:l-s-bvp}
  y(\cdot)\in C^1\big([0,\infty) \mapsto \mathbb R^n\big),\quad  y(+\infty)=0.
\end{gather}

The stated problem belongs to the class of singular ones on account
of both having a singularity at the point $x=0$ and unboundedness of
the interval where the independent variable is defined. The problems
of such a kind often arise when constructing and investigating
solutions of various equations of mathematical phy\-sics. Majority of
papers devoted to study of such problems deal with second and higher
order equations (see e.g. \cite{Kig75}--\cite{DjeMou06}). Despite the
fact that corresponding bibliography amounts to several hundreds of
titles, we failed to find a ready-made procedure for establishing
existence conditions and integ\-ral representation of solutions to
the problem (\ref{eq:l-s-sys})--(\ref{eq:l-s-bvp}). The necessity of
such representation naturally arises when solving the problem about
perturbations of solutions to singular non-linear boundary value
prob\-lems on the semi-axis \cite{PPP03, PPP06}.

While considering the above problem, we did not exclude the
so-called resonance case when the corresponding homogeneous problem
has non-trivial solutions. In this connection results of papers
\cite{Har70}--\cite{Sam02} should be mentioned, which are devoted to
the problem of existence of solutions to linear non-homogeneous
systems bounded on the entire axis, in particular, extension of
Fredholm and Noether theory over such systems. It should be noted
that in papers \cite{Kig03}--\cite{AgaKig06} the authors find quite
general sufficient conditions for boundary value problems on a
finite interval with non-integrable singularities to have the
Fredholm property with index zero.

The  present paper is organized as follows. Section
~\ref{sec:FundOper} contains an auxiliary result about the structure
of a fundamental operator of a linear homogeneous system with
continuous (however non-analytic) coefficients on the interval
$(0,x_0)$ and singular point of the first kind at $x=0$. In section
~\ref{sec:AddCond}, we describe additional conditions imposed on the
linear homogeneous system and classify its solutions in accordance
with their asymptotical behavior when $x\to +0$ and $x\to +\infty $. In
section ~\ref{sec:GenGreenFunc}, the existence criterion for the
solution to a boundary value problem with homogeneous boundary
conditions is established and the Green function for this problem is
constructed. Finally, in section ~\ref{sec:MainRes}, the main result
is stated --- the theorem about existence and integral representation
of solutions to the problem (\ref{eq:l-s-sys})--(\ref{eq:l-s-bvp}).

\section{The structure of the fundamental operator of
linear system near the singular point of  first
kind}\label{sec:FundOper} Consider the linear homogeneous system
associated with (\ref{eq:l-s-sys}):
\begin{equation}\label{eq: mainsys}
y'=\left(\frac{A}{x}+B(x)\right)y.
\end{equation}

In the analytical theory of differential equations the structure of
the fundamental operator of the system $({\ref{eq: mainsys}})$ is
completely investigated under the assumption that the mapping
$B(\cdot)$ is holomorphic in the neighborhood of the singular point
$x=0$ (see e.g. \cite{KodLev58}). In the case where $B(\cdot)$ is
continuous only, the following proposition which is a simple
modification of the result stated in \cite[p.~275]{DalKre70} holds
true.

\begin{proposition}\label{pr:Fundmatr}
There exist numbers $x_0\in(0, \infty)$, $K>0$, and $r>0$ such that
the fundamental operator of the system (\ref{eq: mainsys}) admits
the representation in the form
\begin{equation}\label{fund} Y(x)=\left(E+U(x)\right)x^A,\quad x\in (0,x_0],
\end{equation}  where $E\in \mathrm{Hom}(\mathbb R^n)$ is a unit operator and the mapping
$U(\cdot)\in C^1\big((0,\infty) \mapsto $\\ $\mathrm{Hom}(\mathbb R^n)\big)$ satisfies the
estimate
\[
\|U(x)\|\leq K x|\ln x|^r, \quad x\in (0, x_0].
\]
\end{proposition}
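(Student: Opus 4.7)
The plan is to seek the fundamental operator in the form $Y(x)=(E+U(x))x^A$. Substituting this ansatz into the matrix version of \eqref{eq: mainsys} and right‑multiplying by $x^{-A}$ (which commutes with $A$) converts the problem to the matrix ODE
\[
U'(x)=\frac{1}{x}\bigl(AU(x)-U(x)A\bigr)+B(x)\bigl(E+U(x)\bigr),\qquad x>0.
\]
The homogeneous equation $V'=(AV-VA)/x$ on $\mathrm{Hom}(\mathbb R^n)$ has the explicit propagator $V\mapsto x^A V x^{-A}$, so variation of parameters converts the above to an integral equation whose generic piece reads
\[
U(x)=\int_{x_*}^{x}(x/s)^A\,B(s)\bigl(E+U(s)\bigr)(s/x)^A\,ds,
\]
with a lower limit $x_*\in\{0,x_0\}$ that must be chosen block by block.

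The choice of $x_*$ must reflect the spectrum of $\mathrm{ad}_A\colon V\mapsto AV-VA$, whose eigenvalues are the differences $\lambda_i-\lambda_j$ of eigenvalues of $A$. I would split $\mathrm{Hom}(\mathbb R^n)$ into the $\mathrm{ad}_A$‑invariant subspaces on which $\mathrm{Re}(\lambda_i-\lambda_j)<1$ and on which $\mathrm{Re}(\lambda_i-\lambda_j)\geq 1$, respectively, and integrate from $0$ on the first and from $x_0$ on the second. The elementary estimate $\|t^{\pm A}\|\leq c\,t^{\pm\alpha}(1+|\ln t|)^{n-1}$ shows that the integrand is absolutely integrable down to $s=0$ on the first subspace and gives a contribution of size $O(x)$; on the second subspace, integration from $x_0$ yields a term of size $O(x)$ or $O(x|\ln x|^r)$, the logarithmic factor arising precisely in the resonant case $\lambda_i-\lambda_j=1$ and from Jordan‑block powers of $\ln x$ produced by $x^{\pm A}$.

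The existence of $U$ and the claimed bound are then obtained by Banach's fixed‑point theorem on the weighted space
\[
\mathcal{B}_{K,r}=\bigl\{U\in C\bigl((0,x_0];\mathrm{Hom}(\mathbb R^n)\bigr):\|U(x)\|\leq K x|\ln x|^r,\ x\in(0,x_0]\bigr\},
\]
with $r$ chosen large enough to dominate the logarithmic powers in $\|x^{\pm A}\|$. For $x_0>0$ sufficiently small, the mapping $T$ defined by the split integral equation above sends $\mathcal{B}_{K,r}$ into itself and is a contraction there, and its unique fixed point provides the required $U$; the $C^1$‑regularity is inherited from the ODE itself, and invertibility of $Y(x)=(E+U(x))x^A$ near $0$ follows since $\|U(x)\|\to 0$. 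The main obstacle is the careful treatment of the resonant subspace $\mathrm{Re}(\lambda_i-\lambda_j)\geq 1$, where integration from $0$ diverges and where the logarithmic correction $|\ln x|^r$ in the final estimate becomes unavoidable; once the splitting is arranged, the remainder of the argument is a routine contraction‑mapping computation.
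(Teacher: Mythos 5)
Your proposal is correct and follows essentially the same route as the paper: the same ansatz $Y=(E+U)x^A$, reduction to the commutator equation for $U$, a spectral splitting of $\mathrm{ad}_A$ at the critical real part $1$ dictating the lower limit of integration on each invariant subspace, and a contraction argument in a weighted space giving the bound $Kx|\ln x|^{r}$. The only difference is cosmetic: the paper first substitutes $x=e^{-t}$ and packages the split variation-of-parameters formula as a Green function $G_{\mathcal{A}}$ for a constant-coefficient system on $[t_0,\infty)$ (Lemmas~\ref{lem:bounsol} and~\ref{lem:bsv}), whereas you carry out the identical estimates directly in the variable $x$.
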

\begin{proof}  The
mapping $Y(\cdot):(0,x_0] \mapsto \mathrm{Hom}(\mathbb R^n)$ defined
by $(\ref{fund})$ is a fundament\-al ope\-ra\-tor of the system
$(\ref{eq: mainsys})$ if $U(x)$ satisfies the equation
\[
U'=\frac{1}{x}\left(AU-UA\right)+B(x)\left(E+U\right), \quad x\in
(0,x_0].
\]
After the substitution $\displaystyle{x=e^{-t}}$ we obtain the
following equation  for the opera\-tor
$\displaystyle{V(t):=U(e^{-t})}$:
\begin{equation}\label{eq_v}
\dot{V}=VA-AV-e^{-t}B(e^{-t})\left(E+V\right).
\end{equation}
Thus we are to find the solution to this equation which satisfies
the inequality
\[
\|V(t)\|\leq Kt^r e^{-t}, \quad t\in [t_0,\infty)
\]
for a certain value of $t_0>0.$

The equation  (\ref{eq_v}) can be identified in $\mathbb R^{n^2}$
with the system of the form
\begin{equation}\label{eq_v_comp}
\dot{v}=\mathcal{A} v +e^{-t}\bigl(H(t)v +h(t)\bigr)
\end{equation}
where $\mathcal{A}\in \mathrm{Hom}(\mathbb R^{n^2})$ is a constant
operator and the mappings $H(\cdot)\in C\big([t_0,\infty) \mapsto
\mathrm{Hom}(\mathbb R^{n^2})\big)$ and $h(\cdot)\in C([t_0,\infty)
\mapsto \mathbb R^{n^2})$ satisfy the inequalities $\|H(t)\|\leq
M$,\, $\|h(t)\|\leq M$ for any $t\in [t_0,\infty).$

Now the required result can be obtained as an obvious consequence of
Lemma 1 and Lemma 2 stated below.
\end{proof}

\begin{lemma}\label{lem:bounsol}
Let $\mathcal{A}\in \mathrm{Hom}(\mathbb R^N))$. Then there exists a
mapping $G_\mathcal{A}(\cdot)\in C^{\infty}\big(\mathbb{R}\to
\mathrm{Hom} (\mathbb{R}^N)\big)$ such that for any function
$\mathfrak{f}(t)\in C([t_0,\infty)\to \mathbb{R}^N)$ satisfying the
estimate
\begin{equation} \label{lemma_est}
\|\mathfrak{f}(t)\|\leq M_{\mathfrak{f}} e^{-t}, \,t\in[t_0,\infty),
\end{equation}
with some constant $M_{\mathfrak{f}}>0$ the system
\begin{equation}\label{eq:syslemmabs}
\dot{y}=\mathcal{A}y+\mathfrak{f}(t)
\end{equation}
possesses a bounded on the semi-axis $[t_0, \infty)$ solution of the
form
\begin{equation*}
y(t)=\int\limits_{t_0}^{\infty}G_\mathcal{A}(t-s)\mathfrak{f}(s)ds.
\end{equation*}
This solution satisfies the inequality
\begin{equation*}
\|y(t)\|\leq C_\mathcal{A} M_\mathfrak{f}
e^{-t}\left(1+(t-t_0)^r\right)
\end{equation*}
where $C_\mathcal{A}$ is a positive constant depending on $\mathcal{A}$ only and
$r$ is the maximal dimension of Jordan blocks corresponding to
eigenvalues with the real part equal to $-1$ in the normal form
matrix of the operator $\mathcal{A}$.

If, in addition, $\mathfrak{f}(t)=o(e^{-t}),\;t\to \infty $, then the
solution $y(t)$ has the property ${y(t)=o\big(e^{-t}t^r\big),}$ ${t\to
\infty}$.
 \end{lemma}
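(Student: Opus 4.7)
The plan is to construct $G_\mathcal{A}$ from the spectral dichotomy of $\mathcal{A}$ with respect to the vertical line $\mathrm{Re}\,\lambda=-1$. Let $P_+$ be the spectral projector of $\mathcal{A}$ onto its invariant subspace for eigenvalues with $\mathrm{Re}\,\lambda>-1$, and $P_-:=E-P_+$ the complementary projector (for $\mathrm{Re}\,\lambda\le -1$); both commute with $e^{\mathcal{A}\tau}$. I put
\[
  G_\mathcal{A}(\tau) := \begin{cases} e^{\mathcal{A}\tau}P_-, & \tau\ge 0,\\ -e^{\mathcal{A}\tau}P_+, & \tau < 0,\end{cases}
\]
which is real-analytic on each closed half-line and has the unit jump $G_\mathcal{A}(0+)-G_\mathcal{A}(0-)=E$; the formula $y(t)=\int_{t_0}^{\infty}G_\mathcal{A}(t-s)\mathfrak{f}(s)\,ds$ accordingly splits into a forward part over $[t_0,t]$ and a backward part over $[t,\infty)$.

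The Jordan canonical form of $\mathcal{A}$ yields the two bounds
\[
  \|e^{\mathcal{A}\tau}P_-\|\le C(1+\tau^{r-1})e^{-\tau},\ \tau\ge 0,\qquad \|e^{\mathcal{A}\tau}P_+\|\le Ce^{\nu\tau},\ \tau\le 0,
\]
for some $\nu>-1$, since $P_-\mathcal{A}P_-$ has spectrum in $\{\mathrm{Re}\,\lambda\le -1\}$ with maximal Jordan block dimension $r$ on the critical line, while $P_+\mathcal{A}P_+$ has spectrum strictly to the right of it. Using $\|\mathfrak{f}(s)\|\le M_\mathfrak{f}e^{-s}$, the forward piece is bounded by $CM_\mathfrak{f}e^{-t}\int_{t_0}^t (1+(t-s)^{r-1})\,ds=O(M_\mathfrak{f}e^{-t}(1+(t-t_0)^r))$ and the backward piece by $CM_\mathfrak{f}e^{\nu t}\int_t^\infty e^{-(\nu+1)s}\,ds=O(M_\mathfrak{f}e^{-t})$, whose sum is the claimed estimate. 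The verification that $\dot y=\mathcal{A}y+\mathfrak{f}$ follows by Leibniz differentiation of each piece: the boundary contributions at $s=t$ combine, via the jump relation $P_-+P_+=E$, into $\mathfrak{f}(t)$, while the interior derivatives reassemble $\mathcal{A}y(t)$.

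For the refinement $\mathfrak{f}(t)=o(e^{-t})\Rightarrow y(t)=o(e^{-t}t^r)$, given $\varepsilon>0$ I would choose $T\ge t_0$ with $\|\mathfrak{f}(s)\|\le\varepsilon e^{-s}$ for $s\ge T$ and split the forward integral at $s=T$. The part over $[t_0,T]$ contributes $O(M_\mathfrak{f}(T-t_0)e^{-t}(1+t^{r-1}))=o(e^{-t}t^r)$ as $t\to\infty$; the part over $[T,t]$ is at most $C\varepsilon e^{-t}(1+t^r)$; and the backward integral is $O(\varepsilon e^{-t})$ by the same estimates with $M_\mathfrak{f}$ replaced by $\varepsilon$. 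Sending $t\to\infty$ first and then $\varepsilon\to 0^+$ yields the claim. \emph{The principal obstacle} is the resonance on the line $\mathrm{Re}\,\lambda=-1$: the decay rate of $\mathfrak{f}$ exactly matches the pure exponential part of the Jordan-form growth of $e^{\mathcal{A}\tau}P_-$, and it is this coincidence that produces the polynomial factor $(t-t_0)^r$, so the argument must track Jordan block dimensions on the critical line rather than rely on a crude norm bound.
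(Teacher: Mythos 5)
Your proposal is correct and follows essentially the same route as the paper: a Green-type kernel built from spectral projectors of $\mathcal{A}$ relative to the critical line $\mathrm{Re}\,\lambda=-1$, with the polynomial factor $(t-t_0)^r$ coming from the Jordan blocks on that line, and the $o$-refinement obtained by splitting the integral at a point $T(\epsilon)$. The only cosmetic difference is that the paper uses three projectors (separating the spectrum strictly left of the line from the spectrum on it), whereas you merge those two into a single $P_-$; this changes nothing except that your bound $\|e^{\mathcal{A}\tau}P_-\|\le C(1+\tau^{r-1})e^{-\tau}$ should be read with the understanding that for $r=0$ the critical part is absent and one uses the strictly faster exponential decay instead.
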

\begin{proof}
We give the proof of the first part of the Proposition for the case
where $r\ge1$. Note that there exist three projectors
$\mathcal{P}_i:\mathbb{R}^N\to \mathbb{R}^N,\, i=\overline{1,3},$
such that ${\mathcal{P}_i \mathcal{P}_k=0,\, i\neq k,}$
${\mathcal{P}_1+\mathcal{P}_2+\mathcal{P}_3=E}$, and for some
constants $K_\mathcal{A}>0,\,\gamma_1>-1,\,$ $\gamma_2<-1$ the
following inequalities hold true:
\begin{gather*}
\|e^{\mathcal{A}\tau}\mathcal{P}_1\|\leq K_\mathcal{A} e^{\gamma_1 \tau},\,\tau\leq 0,\\
\|e^{\mathcal{A}\tau}\mathcal{P}_2\|\leq
K_\mathcal{A}\left(1+\tau^{r-1}\right) e^{-\tau},\,\tau\geq 0,\\
\|e^{\mathcal{A}\tau}\mathcal{P}_3\|\leq K_\mathcal{A} e^{\gamma_2
\tau},\,\tau\geq 0.
\end{gather*}

Now we define a function $G_\mathcal{A}(\tau)$ as follows:
\[
G_\mathcal{A}(\tau)=\left\{\begin{array}{cc}
                 -e^{\mathcal{A}\tau}\mathcal{P}_1, & \tau\leq 0, \\
                 e^{\mathcal{A}\tau}(\mathcal{P}_2+\mathcal{P}_3), & \tau>0.
               \end{array}
\right.
\]
The function
\begin{gather*}
y(t):=\int\limits_{t_0}^{\infty}G_\mathcal{A}(t-s)\mathfrak{f}(s)ds  \\ \equiv
\int\limits_{t_0}^{t}e^{\mathcal{A}(t-s)}\mathcal{P}_2
\mathfrak{f}(s)ds+\int\limits_{t_0}^{t}e^{\mathcal{A}(t-s)}\mathcal{P}_3
\mathfrak{f}(s)ds-\int\limits_{t}^{\infty}e^{\mathcal{A}(t-s)}\mathcal{P}_1
\mathfrak{f}(s)ds
\end{gather*}
is well defined and there exists a constant $C_\mathcal{A}
> 0$ dependent on the operator $\mathcal{A}$ only such that
 \begin{gather*}
   \|y(t)\| \leq
   K_\mathcal{A} M_\mathfrak{f} \Biggl(\int\limits_{t_0}^{t}\bigl(1+(t-s)^{r-1}\bigr)
   e^{-(t-s)}e^{-s}ds \\ +
   \int\limits_{t_0}^t
   e^{\gamma_2 (t-s)}e^{-s}ds+\int\limits_{t}^{\infty}e^{\gamma_1(t-s)}e^{-s}ds\Biggr)
   \\ \leq
    K_\mathcal{A} M_\mathfrak{f} e^{-t} \Biggl((t-t_0)+
   \frac{(t-t_0)^r}{r}+
   \frac{1-e^{-(|\gamma_2|-1)(t-t_0)}}{|\gamma_2|-1}+
   \frac{1}{\gamma_1+1}\Biggr)  \\ \leq
    C_\mathcal{A} M_\mathfrak{f}e^{-t}\bigl(1+(t-t_0)^r\bigr).
  \end{gather*}
Therefore, for $y(t)$ the inequality $(\ref{lemma_est})$ holds true.
One can easily make sure by the direct check that this function is in
fact the solution to the system $(\ref{eq:syslemmabs})$.

Now let  $\mathfrak{f}(t)=o(e^{-t}),\;t\to \infty $. Then for an
arbitrary $\epsilon
>0$ one can choose $T(\epsilon)>t_0$ in such a way that $\|\mathfrak{f}(t)\|\le \epsilon
e^{-t}$ for $t\ge T(\epsilon)$. Represent the solution $y(t)$ in the form
\begin{gather*}
  y(t)=\int_{t_0}^{T(\epsilon)}G_\mathcal{A}(t-s)\mathfrak{f}(s)\,ds+
  \int_{T(\epsilon)}^{\infty}G_\mathcal{A}(t-s)\mathfrak{f}(s)\,ds.
\end{gather*}
In accordance with what has been proved above, the norm of the second
addend does not exceed ${C_\mathcal{A}\epsilon e^{-t}(1+(t-T(\epsilon))^r)}$ for any
$t\ge T(\epsilon)$. For the first addend, when $t\ge T(\epsilon)$ we have:
\begin{gather*}
\int\limits_{t_0}^{T(\epsilon)}G_\mathcal{A}(t-s)\mathfrak{f}(s)\,ds
= \int\limits_{t_0}^{T(\epsilon)}e^{\mathcal{A}(t-s)}\mathcal{P}_2
\mathfrak{f}(s)ds+\int\limits_{t_0}^{T(\epsilon)}e^{\mathcal{A}(t-s)}\mathcal{P}_3
\mathfrak{f}(s)ds.
\end{gather*}
If $r=0$, then $\mathcal{P}_2=0$ and
\begin{gather*}
\Bigg\|\int\limits_{t_0}^{T(\epsilon)}G_\mathcal{A}(t-s)\mathfrak{f}(s)\,ds\Bigg\|=O\big(e^{\gamma_2t}\big)=o\big(e^{-t}\big),\quad
t\to \infty.
\end{gather*}
If $r>0$, then
\begin{gather*}
\Bigg\|\int\limits_{t_0}^{T(\epsilon)}G_\mathcal{A}(t-s)\mathfrak{f}(s)\,ds\Bigg\|
=O\big(e^{-t}\big)=o\big(e^{-t}t^r\big),\quad t\to \infty.
\end{gather*}
\end{proof}

\begin{lemma}\label{lem:bsv} Assume that
$H(\cdot)\in C\big([t_0,\infty) \mapsto \mathrm{Hom}(\mathbb R^{N})\big), \;h(\cdot)\in
C\big([t_0,\infty)$ $ \mapsto \mathbb R^{N})\big)$, and that there exist constants
$M>0,\;m>0$ such that $\|H(t)\|\leq M, \|h(t)\|\leq m$ for any $t\ge
t_0$. Let $C_\mathcal{A}$ and $r$ be the numbers defined in Lemma
\ref{lem:bounsol}. If the inequalities
\begin{equation}\label{eq:t_0:q<1}
t_0>r,\quad q:=2C_\mathcal{A}Mt_0^re^{-t_0}<1
\end{equation}
hold true, then the system (\ref{eq_v_comp}) has a solution $v(t)$
such that
\[
  \|v(t)\|\le \frac{2C_\mathcal{A}m}{1-q}t^re^{-t},\quad t\ge t_0.
\]
If, in addition, $h(t)\to 0,\;t\to \infty $, then
$v(t)=o(t^re^{-t}),\;t\to \infty $.
\end{lemma}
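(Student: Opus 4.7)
The plan is to recast \eqref{eq_v_comp} as a fixed-point equation in a weighted Banach space and apply the Banach contraction principle. Let
\[
X := \Bigl\{ v \in C\bigl([t_0,\infty) \to \mathbb{R}^{n^2}\bigr) \;:\; \|v\|_* := \sup_{t \ge t_0} t^{-r} e^{t} \|v(t)\| < \infty \Bigr\}.
\]
For each $v \in X$ consider the inhomogeneity $\mathfrak{f}_v(t) := e^{-t}\bigl(H(t) v(t) + h(t)\bigr)$. The assumption $t_0 > r$ forces $t \mapsto t^r e^{-t}$ to be non-increasing on $[t_0,\infty)$, so one obtains
\[
\|\mathfrak{f}_v(t)\| \le \bigl(M t^r e^{-t}\|v\|_* + m\bigr) e^{-t} \le M_{\mathfrak{f}_v} e^{-t}, \quad M_{\mathfrak{f}_v} := M t_0^r e^{-t_0}\|v\|_* + m.
\]
The hypothesis of Lemma \ref{lem:bounsol} is thus met, and I would define
\[
(Tv)(t) := \int_{t_0}^{\infty} G_\mathcal{A}(t-s)\,\mathfrak{f}_v(s)\,ds,
\]
which by Lemma \ref{lem:bounsol} solves $\dot{y} = \mathcal{A} y + \mathfrak{f}_v(t)$; consequently any fixed point of $T$ is a solution of \eqref{eq_v_comp}.

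The contraction step is where I would concentrate the care. Combining the estimate from Lemma \ref{lem:bounsol} with the elementary inequality $1+(t-t_0)^r \le 2t^r$ (valid for every integer $r \ge 0$ whenever $t \ge t_0 > r$) gives $\|(Tv)(t)\| \le 2 C_\mathcal{A} M_{\mathfrak{f}_v} t^r e^{-t}$, that is,
\[
\|Tv\|_* \le 2 C_\mathcal{A} M_{\mathfrak{f}_v} = q\|v\|_* + 2 C_\mathcal{A} m,
\]
so $T$ maps the closed ball $\{v : \|v\|_* \le 2 C_\mathcal{A} m/(1-q)\}$ into itself. Repeating the same bound for the difference $\mathfrak{f}_{v_1} - \mathfrak{f}_{v_2} = e^{-t} H(t)(v_1(t) - v_2(t))$ yields
\[
\|Tv_1 - Tv_2\|_* \le q\|v_1 - v_2\|_*.
\]
Since $q<1$, Banach's theorem furnishes a unique fixed point $v \in X$, and the self-bound $\|v\|_* \le q\|v\|_* + 2C_\mathcal{A} m$ delivers the announced estimate $\|v(t)\| \le \dfrac{2 C_\mathcal{A} m}{1-q}\, t^r e^{-t}$.

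For the second assertion, suppose $h(t) \to 0$ as $t \to \infty$. The solution just obtained satisfies $\|v(t)\| = O(t^r e^{-t})$, so $\|e^{-t} H(t) v(t)\| = O(t^r e^{-2t}) = o(e^{-t})$ and $\|e^{-t} h(t)\| = o(e^{-t})$, whence $\mathfrak{f}_v(t) = o(e^{-t})$. The last clause of Lemma \ref{lem:bounsol} then upgrades the bound to $v(t) = (Tv)(t) = o(t^r e^{-t})$. The only real difficulty is selecting the correct functional setting: the weight $t^r e^{-t}$ is dictated precisely by the resolvent estimate of Lemma \ref{lem:bounsol}, and the factor $t_0^r e^{-t_0}$ in the definition of $q$ is what allows the contraction condition $q<1$ to be met by taking $t_0$ large enough, as built into hypothesis \eqref{eq:t_0:q<1}.
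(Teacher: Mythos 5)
Your proposal is correct and follows essentially the same route as the paper: both recast the system as the fixed-point equation $v(t)=\int_{t_0}^{\infty}G_\mathcal{A}(t-s)e^{-s}\bigl(H(s)v(s)+h(s)\bigr)\,ds$ via Lemma \ref{lem:bounsol} and apply the contraction principle on the set of continuous functions bounded by $Ct^re^{-t}$, using $t_0>r$ to convert the factor $e^{-t}\bigl(1+(t-t_0)^r\bigr)$ into $2t_0^re^{-t_0}$ and thereby obtain the contraction constant $q$. The only cosmetic difference is that you work with the weighted norm $\|v\|_*=\sup_{t\ge t_0}t^{-r}e^{t}\|v(t)\|$, whereas the paper keeps the plain uniform metric on the ball $\mathcal{M}_{t_0,C}$; the second assertion is handled identically in both.
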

\begin{proof}

In view of the Lemma \ref{lem:bounsol}, we are going to find the
solution to the system $(\ref{eq_v_comp})$ satisfying the integral
equation
\begin{equation}\label{int_eq}
v(t)=\int\limits_{t_0}^{\infty}G_\mathcal{A}(t-s)e^{-s}\bigl(H(s)v(s)+h(s)\bigr)ds.
\end{equation}

Denote
$$\mathcal{G}[v(\cdot)](t):=\int\limits_{t_0}^{\infty}G_\mathcal{A}(t-s)e^{-s}\bigl(H(s)v(s)+h(s)\bigr)ds$$
and define the space of functions $$\mathcal{M}_{t_0,C}:=\{v(t)\in C([t_0,
\infty)\to \mathbb{R}^N): \|v(t)\|\leq C t^re^{-t},\,t\geq t_0\}.$$ Let
us show that if (\ref{eq:t_0:q<1}) holds true, then it is possible to
choose the constant
 $C>0$ in such a way that
${\mathcal{G}:\mathcal{M}_{t_0,C} \mapsto \mathcal{M}_{t_0,C}}$ and
this mapping is a contraction in the uniform metric.

 The Lemma \ref{lem:bounsol} implies that
\begin{gather*}
\|\mathcal{G}[v(\cdot)](t)\|\leq C_\mathcal{A} \bigl(M \sup\limits_{t\geq
t_0}\bigl(Ct^re^{-t}\bigr)+m\bigr)e^{-t}\bigl(1+(t-t_0)^r\bigr) \\
\leq 2C_\mathcal{A}\bigl(MCt_0^re^{-t_0}+m\bigr)t^re^{-t},\quad t_0>r,
\end{gather*}
for any function $v(t)\in \mathcal{M}_{t_0,C}$. Besides, when $t_0>r$, for
any $v(t),\,u(t)\in \mathcal{M}_{t_0,C}$ we obtain:
\begin{gather*}
\|\mathcal{G}[v(\cdot)-u(\cdot)](t)\|\leq C_\mathcal{A} M
e^{-t}\bigl(1+(t-t_0)^r\bigr)\sup\limits_{t\geq t_0}\|v(t)-u(t)\|
\\ \leq 2C_\mathcal{A}Mt^r_0e^{-t_0}\sup\limits_{t\geq
t_0}\|v(t)-u(t)\|=q\sup\limits_{t\geq t_0}\|v(t)-u(t)\|.
\end{gather*}
Since $q<1$, it is clear that $\mathcal{G}$ is a contraction mapping
on $\mathcal{M}_{t_0,C}$ once the following inequality  holds true:
\begin{gather*}
2C_\mathcal{A}(MCt^r_0e^{-t_0}+m)\le C.
\end{gather*}
Hence, by setting $$C:=\frac{\displaystyle 2C_\mathcal{A}m}{\displaystyle
1-q}$$ we guarantee the existence of a unique solution $v(t)\in
\mathcal{M}_{t_0,C}$ to the equation $(\ref{int_eq})$.

Now, suppose, in addition, that $h(t)\to 0,\; t\to \infty $. Since
the solution $v(t)$ can be represented in the form
\begin{gather*}
 v(t)= \int\limits_{t_0}^{\infty}G_\mathcal{A}(t-s)\mathfrak{f}(s)ds
\end{gather*}
where $\mathfrak{f}(t)=e^{-t}(H(t)v(t)+h(t))=o(e^{-t}),\;t\to \infty $,
then in accordance with the Lemma \ref{lem:bounsol} we obtain:
${v(t)=o(t^re^{-t}),\;t\to \infty}$.
\end{proof}

\section{Additional conditions for the linear\\ homogeneous system and
their corollaries}\label{sec:AddCond}

Hereafter we assume that for the linear homogeneous system (\ref{eq:
mainsys}) conditi\-ons \textbf{A, B} described below hold true.
These conditi\-ons concern local properties of the system in
neighborhoods of the points $x=0$ and $x=+\infty $.

\textbf{A}: the characteristic polynomial of the operator
 $A$ has no roots with real part equal to 1;

\textbf{B}: the system  (\ref{eq: mainsys}) is exponentially
dichotomic on the semi-axis $[x_0,\infty)$ for some (and therefore, for
any) positive $x_0$.

Let $y(x,y_0)$ be a solution to the system (\ref{eq: mainsys})
satisfying the initial condition $y(x_0,y_0)$ $=y_0$. For the sake of
generality we assume that the characteristic polynomial of the
operator $A$ has roots with real parts both less and greater than 1
and the system (\ref{eq: mainsys}) has both bounded and unbounded
solutions  on the half-line $[x_0,\infty)$.

Under the conditions \textbf{ A} and \textbf{B} there exist subspaces
$\mathbb V_+$ and $\mathbb U_-$ with the following properties:\par 1. There exists
$\alpha >0$ such that for any subspace $\mathbb V_-$ which is a direct
supplement of $\mathbb V_+$ to $\mathbb R^n$ one can choose a constant $c_0>0$ in
such a way that
\begin{align}
   \|y(x,y_0)\| \le
  c_0\left(\frac{x}{s}\right)^{1+\alpha}\|y(s,y_0)\|,& \quad 0<x\le s\le x_0,
\quad \text{if}\;\;y_0\in \mathbb V_+;
  \label{eq:V+}\\
    \|y(x,y_0)\| \le
  c_0\left(\frac{x}{s}\right)^{1-\alpha}\|y(s,y_0)\|,&\quad 0<s\le x\le x_0,
  \quad \text{if}\;\;y_0\in \mathbb V_- \label{eq:V-}.
\end{align}
(This property results from the Proposition~\ref{pr:Fundmatr} and the
condition \textbf{A}.)
\par 2. There exists
a constant $\gamma >0$ such that for any subspace $\mathbb U_+$ which is a
direct supplement of $\mathbb U_-$ to $\mathbb R^n$ one can choose a constant
$c_*>0$ in such a way that
\begin{align}
  \|y(x,y_0)\|\le c_*e^{-\gamma(x-s)}\|y(s,y_0)\|,&\quad x_0\le s\le x,
  \quad \text{if}\;\;y_0\in \mathbb U_- \label{eq:U-};\\
  \|y(x,y_0)\|\le c_*e^{\gamma(x-s)}\|y(s,y_0)\|,&\quad x_0\le x\le
  s,\quad \text{if}\;\;y_0\in \mathbb U_+ \label{eq:U+}.
\end{align}
(See remark 3.4 in \cite[p.~235]{DalKre70}.)
\par If the subspace $\ker A$ is non-trivial, then there exists a subspace $\mathbb V_-^0$
iso\-morphic to the subspace $\ker A$ and having the next property:
\par 3. For any $y_*\in \mathbb V_-^0$ there exists a unique vector
$\zeta \in \ker A$ such that
\begin{gather}\label{eq:V0}
y(x,y_*)=\big(E+\Theta (x)\big)\zeta,\quad x\to +0,
\end{gather}
where $\Theta(\cdot)\in C^1\big([0,x_0] \mapsto \mathrm{Hom}(\mathbb
R^n)\big)$ and $\Theta(x)=x(E-A)^{-1}B(0)+o(x),\;x\to +0$. At the
same time, $\mathbb V_-^0\cap \mathbb V_+=\{0\}$ and the subspace
$\mathbb V_+\oplus \mathbb V_-^0$ coincides with the subspace of
initial values (for  $x=x_0$) of continuously differentiable on
 $[0,\infty)$ solutions to the system ~(\ref{eq: mainsys}). (See the corollary from the Proposition
\ref{exC^1sol} which is stated in section~\ref{sec:MainRes}.)

Now the space  $\mathbb R^n$ can be represented as the direct sum of six
subspaces $\mathbb L_1,\ldots,\mathbb L_6$ defined in the following way:

1) $\mathbb L_1:=\mathbb U_-\cap \mathbb V_+$;

2) $\mathbb L_2$ is a direct supplement of the subspace $\mathbb L_1$ to
$\mathbb U_-\cap (\mathbb V_+ \oplus \mathbb V_-^0)$, so that
\begin{gather*}
  \mathbb L_1\oplus\mathbb L_2=\mathbb U_-\cap (\mathbb V_+ \oplus \mathbb V_-^0);
\end{gather*}

 3) $\mathbb L_3$ is a direct supplement of the subspace $\mathbb U_-\cap (\mathbb V_+ \oplus \mathbb V_-^0)$
 to $\mathbb U_-$, so that
\begin{gather*}
  \mathbb L_1\oplus \mathbb L_2 \oplus \mathbb L_3=\mathbb U_-;
\end{gather*}

4) $\mathbb L_4$ is a direct supplement of the subspace $\mathbb
L_1=\mathbb U_-\cap \mathbb V_+$ to $\mathbb V_+$, so that
\begin{gather*}
  \mathbb V_+=\mathbb L_1\oplus\mathbb L_4;
\end{gather*}

5) $\mathbb L_5$ is a direct supplement of the subspace $(\mathbb U_-\cap
(\mathbb V_+ \oplus \mathbb V_-^0))\oplus \mathbb L_4$ to $\mathbb V_+ \oplus \mathbb V_-^0$, so that
\begin{gather*}
  \mathbb L_1\oplus \mathbb L_2\oplus \mathbb L_4\oplus \mathbb L_5=\mathbb V_+\oplus \mathbb V_-^0,
   \end{gather*}
and taking into account the equalities $(\mathbb L_1\oplus\mathbb
L_4)\cap\mathbb V_-^0=\{0\}$ and $\mathrm{dim}\,\mathbb
L_2+\mathrm{\dim}\,\mathbb L_5=\mathrm{dim}\,\mathbb V_-^0$ we
choose $\mathbb L_5\subset \mathbb V_{-}^0$;

6) $\mathbb L_6$ is a direct supplement of the subspace $\mathbb L_1\oplus \cdots
\oplus \mathbb L_5=\mathbb U_{-}\oplus \mathbb L_4\oplus \mathbb L_5$ to $\mathbb R^n$.

If the two subspaces $\mathbb U_+$ and $\mathbb V_-$, which are direct supplements
of the subspaces $\mathbb U_-$ and $\mathbb V_+$ respectively, are defined by the
equalities
\begin{gather*}
  \mathbb U_+:=\mathbb L_4\oplus \mathbb L_5\oplus\mathbb L_6,\quad \mathbb V_-:=\mathbb L_2\oplus
  \mathbb L_3\oplus \mathbb L_5\oplus\mathbb L_6,
\end{gather*}
then the above assumptions allow us to distinguish six types of
solu\-tions to the system (\ref{eq: mainsys}). Namely: if $y_0\in
\mathbb L_1$, then the solution $y(x,y_0)$ satisfies the
inequalities (\ref{eq:V+}) and (\ref{eq:U-}); the solution for which
 $y_0\in \mathbb L_2$ fulfills the inequality
(\ref{eq:U-}), and there exists unique $y_*\in \mathbb V_-^0$ such
that
\[
  \|y(x,y_0)-y(x,y_*)\|=o(x),\quad x\to 0;
\]
the solution for which $y_0\in \mathbb L_3$ satisfies the
inequalities (\ref{eq:V-}) and (\ref{eq:U-}), besides, for this
solution the derivative $y'(+0;y_0)$ does not exist; for the
solution with $y_0\in \mathbb L_4$ the inequalities (\ref{eq:V+})
and (\ref{eq:U+}) hold true; the solution having initial value from
$\mathbb L_5$ fulfills the inequality (\ref{eq:U+}), and there is a
unique $\zeta \in \ker A$ for which (\ref{eq:V0}) is valid; finally,
if $y_0\in \mathbb L_6$, then the solution $y(x,y_0)$ satisfies
inequalities (\ref{eq:V-}) and (\ref{eq:U+}), and  for such a
solution the derivative $y'(+0;y_0)$ does not
exist.\label{page:vlast2}

Let $E=P_1+\cdots +P_6$ be the decomposition of the unit operator into
the sum of mutually disjunctive projectors generated by the
decompo\-si\-tion $\mathbb R^n=\mathbb L_1\oplus \cdots \oplus \mathbb L_6$. Define the
following operators:
\begin{gather*}
  Q_+:=P_1+P_4,\;Q_-:=P_2+P_3+P_5+P_6,\\
  P_-:=P_1+P_2+P_3,\;P_+:=P_4+P_5+P_6.
\end{gather*}
It is clear that the  projectors $Q_+,\;Q_-$ correspond to the
decomposition $\mathbb R^n=\mathbb V_+\oplus \mathbb V_-$, while $P_-,\; P_+$  correspond
to the decomposition $\mathbb R^n=\mathbb U_-\oplus \mathbb U_+$, and there exist
constants $C_0>0$ and $C_*>0$ such that for the normalized at the
point $x_0$ evolution operator $Y(x;x_0)$ of the system (\ref{eq:
mainsys}) the following estimates are valid:
\begin{gather}
\|Y(x;x_0)Q_+Y^{-1}(s;x_0)\|\leq
C_0\left(\frac{x}{s}\right)^{1+\alpha},\quad
0<x\leq s\leq x_0, \label{eq:V+Y}\\
 \|Y(x;x_0)Q_-Y^{-1}(s;x_0)\|\leq
C_0\left(\frac{x}{s}\right)^{1-\alpha},\quad 0<s\leq x\leq
x_0,\label{eq:V-Y}
\end{gather}
and
\begin{gather}\label{eq:P-Y}
\|Y(x;x_0)P_-Y^{-1}(s;x_0)\|\leq C_* e^{-\gamma (x-s)},\quad x_0\leq
s\leq x,\\ \|Y(x;x_0)P_+Y^{-1}(s;x_0)\|\leq C_* e^{-\gamma (s-x)},\quad
x_0\leq x\leq s.\label{eq:P+Y}
\end{gather}

\medskip

\section{Generalized Green function for the boundary value problem
with homogeneous boundary\\ conditions}\label{sec:GenGreenFunc}

Consider the boundary value problem of the form
\begin{gather}\label{eq:nonhom1}
y'=\left(\frac{A}{x}+B(x)\right)y+g(x),\\ \label{eq:hombc}
  y(\cdot)\in C^1([0,\infty) \mapsto \mathbb R^n),\quad y(+0)=0,\quad y(+\infty)=0,
\end{gather}
in the case of  function $g(\cdot)\in C([0, \infty) \mapsto \mathbb R^n)$ vanishing at
infinity: $g(x)\to 0$ when $x\to +\infty$. Let $m:=\sup_{x\in
[0,\infty)}\|g(x)\|$.

First, we prove that any element of $\ker A$ can be brought into
 corres\-pon\-dence with at least one solution which is continuously
 differentiable on $[0,\infty)$.
\begin{proposition}\label{exC^1sol}
Under the  condition $\mathbf{A}$, for any $\zeta \in \ker A$ there exists a
solution to the system (\ref{eq:nonhom1}) of the form
\begin{equation}\label{eq:diffsol}
  y_{\zeta}(x)=\zeta +\zeta_1x+o(x),\quad x\to +0,
\end{equation}
where  $\zeta_1:=(E-A)^{-1}\bigl(B(0)\zeta+g(0)\bigr)$. Conversely, every
continuously differen\-tiable on $[0,\infty)$ solution to the system
(\ref{eq:nonhom1}) can be represented in the form (\ref{eq:diffsol}).
\end{proposition}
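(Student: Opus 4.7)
The plan is to prove both directions separately. The converse is a short extraction from the ODE itself, while the direct direction combines Proposition \ref{pr:Fundmatr} with the dichotomy estimates from section \ref{sec:AddCond}.

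For the converse, suppose $y \in C^1([0,\infty) \mapsto \mathbb{R}^n)$ solves (\ref{eq:nonhom1}) and rewrite the equation as $A y(x)/x = y'(x) - B(x) y(x) - g(x)$. The right-hand side tends to $y'(0) - B(0) y(0) - g(0)$ as $x \to 0^+$, which is finite, so we must have $A y(0) = 0$, i.e.\ $\zeta := y(0) \in \ker A$. Inserting $y(x) = \zeta + y'(0)\,x + o(x)$ back into the ODE (the singular term $A y(x)/x$ is regular thanks to $A \zeta = 0$) and matching constants as $x \to 0^+$ gives $(E - A) y'(0) = B(0) \zeta + g(0)$; condition $\mathbf{A}$ makes $E - A$ invertible, so $y'(0) = \zeta_1$.

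For the direct direction, given $\zeta \in \ker A$ I would pose the ansatz $y_\zeta(x) = \zeta + \zeta_1 x + z(x)$ with $z(x) = o(x)$ as $x \to 0^+$. A direct substitution, using $A \zeta = 0$ and $(E - A)\zeta_1 = B(0) \zeta + g(0)$, reduces the problem to solving
\[
z'(x) = \Big(\frac{A}{x} + B(x)\Big) z(x) + F(x),
\]
where $F(x) := (B(x) - B(0)) \zeta + (g(x) - g(0)) + x B(x) \zeta_1$ is continuous and bounded on $[0,\infty)$ with the crucial property $F(0) = 0$. To construct $z$ on $(0, x_0]$ I would take the fundamental operator $Y(x) = (E + U(x)) x^A$ from Proposition \ref{pr:Fundmatr} and the dichotomy projectors $Q_\pm$ onto $\mathbb{V}_\pm$ from section \ref{sec:AddCond}, and define, by variation of parameters with the ``non-resonant'' base point in each spectral component,
\[
z(x) := Y(x) \int_{x_0}^x Y^{-1}(s) Q_+ F(s)\, ds + Y(x) \int_0^x Y^{-1}(s) Q_- F(s)\, ds.
\]
Convergence of the second integral near $s = 0$ and an $O(x)$ bound for $\|z(x)\|$ follow at once from (\ref{eq:V+Y})--(\ref{eq:V-Y}) and boundedness of $F$. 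After extending $z$ to $[x_0, \infty)$ by standard non-singular ODE theory, I would read off $y_\zeta'(0^+) = \zeta_1$ directly from the equation, the only input being $z(x)/x \to 0$.

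The main obstacle is precisely this last input: upgrading $\|z(x)\|=O(x)$ to genuine $o(x)$, which is what makes $y_\zeta$ differentiable (rather than merely continuous) at the origin. This is where $F(0) = 0$ must be exploited: for every $\varepsilon > 0$ pick $\delta > 0$ with $\|F(s)\| \le \varepsilon$ on $[0, \delta]$ and split each of the two integrals at $\delta$; the near-zero parts contribute $O(\varepsilon\, x)$ by the same dichotomy bounds (\ref{eq:V+Y})--(\ref{eq:V-Y}), the far parts contribute $O(x^{1+\alpha})$, and arbitrariness of $\varepsilon$ yields $z(x)/x \to 0$ as desired.
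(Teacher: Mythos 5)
Your argument is correct, but after the common first step it takes a genuinely different route from the paper's. Both proofs substitute $y=\zeta+\zeta_1x+z$, reducing the problem to finding a solution $z(x)=o(x)$ of $z'=\big(\frac{A}{x}+B(x)\big)z+F(x)$ with $F(0)=0$ (the paper's $\tilde g$). The paper then passes to $x=e^{-t}$ and applies Lemma~\ref{lem:bsv}: condition $\mathbf{A}$ means $-A$ has no eigenvalues with real part $-1$, so the exponent $r$ of Lemma~\ref{lem:bounsol} vanishes and the contraction-mapping solution obeys $\tilde z(t)=o(e^{-t})$, i.e.\ $z(x)=o(x)$. You instead stay in the $x$-variable, build $z$ by variation of parameters split along $\mathbb V_+$ and a complement, and upgrade $O(x)$ to $o(x)$ by an $\varepsilon$--$\delta$ splitting of the integrals exploiting $F(0)=0$; this correctly isolates and resolves the one real difficulty, and essentially reproduces formula \eqref{eq:C^1(0,8)} with $v=0$ for the forcing $F$. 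The paper's route is more self-contained (Lemmas~\ref{lem:bounsol}--\ref{lem:bsv} deliver existence of $z$ and its asymptotics in one stroke), while yours reuses the dichotomy estimates of Section~\ref{sec:AddCond} and avoids a second fixed-point argument. Two points need attention, neither fatal: (i) in your display the projector must sit between the two evolution operators, i.e.\ $Y(x;x_0)Q_{\pm}Y^{-1}(s;x_0)$, not to the right of $Y^{-1}(s)$ --- otherwise \eqref{eq:V+Y}--\eqref{eq:V-Y} do not apply to the integrand (your subsequent appeal to exactly those estimates shows this is what you intend); (ii) since property~3 of Section~\ref{sec:AddCond} (the space $\mathbb V_-^0$, hence the paper's specific $Q_-$) is itself deduced from the present Proposition, you should take $\mathbb V_-$ to be an arbitrary direct supplement of $\mathbb V_+$, for which \eqref{eq:V+}--\eqref{eq:V-} follow from Proposition~\ref{pr:Fundmatr} and condition $\mathbf{A}$ alone, so as to avoid circularity. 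Your detailed treatment of the converse (which the paper declares obvious) is sound; the invertibility of $E-A$ is precisely the statement that $1$ is not an eigenvalue of $A$, which is part of condition $\mathbf{A}$.
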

\begin{proof} The change of dependent variable $y=\zeta +\zeta_1x+z$ in
(\ref{eq:nonhom1}) leads to the system
\begin{gather*}
  z'=\left(\frac{A}{x}+B(x)\right)z+\tilde{g}(x)
\end{gather*}
where $\tilde{g}(x)=(B(x)-B(0))\zeta+g(x)-g(0)+xB(x)\zeta_1=o(1),\;x\to +0$.
After the substitution $x=e^{-t}$ we obtain the system
\begin{gather}\label{eq:sysforz}
  \dot{z}=-\big(A+e^{-t}B\big(e^{-t}\big)\big)z-e^{-t}\tilde{g}\big(e^{-t}\big).
\end{gather}
The value $t_0>0$ can be chosen sufficiently large, so that the
conditions of Lemma 2 hold true for this system. In accordance with
this Lemma and taking into account that the characteristic polynomial
of the opera\-tor $-A$ has no roots with the real part equal to $-1$,
there exists the solution $\tilde{z}(t)$ to the system (\ref{eq:sysforz})
satisfying the equality
\begin{gather*}
  \tilde{z}(t)=-\int_{t_0}^{\infty}G_{-A}(t-s)e^{-s}\big(B\big(e^{-s}\big)\tilde{z}(s)+
  \tilde{g}(e^{-s})\big)\,ds
\end{gather*}
and, besides, having the property $\tilde{z}(t)=o(e^{-t}),\;t\to \infty $.
But in such a case the function ${z(x):=\tilde{z}(-\ln x)=o(x),\;x\to 0}$,
generates the required solution $y(x)=\zeta+\zeta_1x+z(x)$ of the system
(\ref{eq:nonhom1}). The second part of the Proposition is obvious.
\end{proof}

\begin{corollary}\label{cor:exC^1sol} There exists a mapping
$\Theta(\cdot)\in C^1\big([0,x_0] \mapsto \mathrm{Hom}(\mathbb R^{n})\big)$ of the
form $\Theta(x)= x(E-A)^{-1}B(0)+o(x),\;x\to +0$, such that for any $\zeta
\in \ker A$ the function
\begin{gather*}
  y_\zeta(x)=\big(E+\Theta (x)\big)\zeta
\end{gather*}
is a solution to the homogeneous system (\ref{eq: mainsys})
corresponding to the vector $\zeta $.
\end{corollary}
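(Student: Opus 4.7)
\emph{Proof plan.} The corollary is essentially a packaging of Proposition \ref{exC^1sol} (applied in the homogeneous case $g\equiv 0$) into a single $C^1$ operator-valued map. The plan is: (i) extract from the proposition a linear family of solutions parametrised by $\zeta\in\ker A$; (ii) encode this family as a linear operator $\Psi(x)$ on $\ker A$; (iii) extend $\Psi(x)$ to an endomorphism of $\mathbb R^n$ whose asymptotic form matches the statement.

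First, I would set $g\equiv 0$ in Proposition \ref{exC^1sol}. For each $\zeta\in\ker A$ this yields a $C^1$-solution
$$y_\zeta(x)=\zeta+x(E-A)^{-1}B(0)\zeta+o(x),\qquad x\to +0,$$
produced by solving the linear fixed-point equation for $\tilde z$ in the proof of that proposition. Because both the driving term $\tilde g$ (which depends on $\zeta$ through $\zeta_1=(E-A)^{-1}B(0)\zeta$) and the linear operator underlying the fixed-point equation depend linearly on $\zeta$, the unique fixed point furnished by Lemma \ref{lem:bsv} is also linear in $\zeta$. Consequently $\zeta\mapsto y_\zeta$ is a linear map $\ker A\to C^1([0,x_0]\to\mathbb R^n)$, and
$$\Psi(x)\zeta:=y_\zeta(x)-\zeta=x(E-A)^{-1}B(0)\zeta+o(x),\qquad x\to +0,$$
defines a $C^1$ family of linear maps from $\ker A$ into $\mathbb R^n$.

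Next, to obtain an endomorphism of the whole space with the prescribed asymptotics, I would fix a projector $P\in\mathrm{Hom}(\mathbb R^n)$ onto $\ker A$ (along any chosen complement) and set
$$\Theta(x):=\Psi(x)P+x(E-A)^{-1}B(0)(E-P).$$
For $\zeta\in\ker A$ one has $P\zeta=\zeta$ and $(E-P)\zeta=0$, so $(E+\Theta(x))\zeta=\zeta+\Psi(x)\zeta=y_\zeta(x)$, which is the required solution of the homogeneous system. On the other hand, $\Psi(x)P=x(E-A)^{-1}B(0)P+o(x)$, hence globally
$\Theta(x)=x(E-A)^{-1}B(0)+o(x)$ as $x\to +0$. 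The $C^1$-regularity of $\Theta$ on $[0,x_0]$ follows from the $C^1$-regularity of $y_\zeta$ (whence of $\Psi$) together with the smoothness of the polynomial second summand.

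The only point that really requires care is the linearity of $\zeta\mapsto y_\zeta$. Proposition \ref{exC^1sol} is not formulated as a uniqueness statement for $C^1$-solutions, so I would not appeal to uniqueness of the solution itself, but rather trace the linearity through the contraction argument of Lemma \ref{lem:bsv}, whose unique fixed point in $\mathcal M_{t_0,C}$ is manifestly a linear function of the affine data. Once this is observed, the remainder of the argument is a direct assembly of pieces already at hand.
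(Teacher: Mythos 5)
Your proof is correct and follows the route the paper intends: the corollary is stated there without an explicit proof, as an immediate consequence of Proposition~\ref{exC^1sol} applied with $g\equiv 0$, and the two points you fill in --- linearity of $\zeta\mapsto y_\zeta$ traced through the affine contraction of Lemma~\ref{lem:bsv}, and the extension of the resulting operator from $\ker A$ to an endomorphism of $\mathbb R^n$ with the stated asymptotics --- are exactly the implicit steps. No gaps.
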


\begin{proposition}\label{pr:c^1sol-s} The family of functions defined as
\begin{gather}\nonumber
\bar{y}_v(x)=Y(x;x_0)v
+\int\limits_{0}^{x}Y(x;x_0)Q_-Y^{-1}(s;x_0)g(s)ds \\ +
\int\limits_{x_0}^{x}Y(x;x_0)Q_+Y^{-1}(s;x_0)g(s)ds,
\label{eq:C^1(0,8)}
\end{gather}
where $v $ is an arbitrary vector from $\mathbb V_+\oplus\mathbb V_-^0$, determines
all solutions to the system (\ref{eq:nonhom1}) of the class
$C^1([0,\infty) \mapsto \mathbb R^n)$. Each of such solutions satisfies the
condition $\bar{y}_v(+0)=0$ iff $v\in \mathbb L_1\oplus\mathbb L_4=\mathbb V_+$.
\end{proposition}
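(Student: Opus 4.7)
The plan is first to verify that $\bar{y}_v$ is a well-defined solution of (\ref{eq:nonhom1}) for every $v\in\mathbb{R}^n$ and that $v\mapsto\bar{y}_v$ exhausts all solutions. Estimate (\ref{eq:V-Y}) gives $\|Y(x;x_0)Q_-Y^{-1}(s;x_0)\|\le C_0(x/s)^{1-\alpha}$ on $0<s\le x\le x_0$, so the improper integral at $s=0$ converges and in fact $\int_0^x Y(x;x_0)Q_-Y^{-1}(s;x_0)g(s)\,ds = O(x)$ as $x\to+0$; an analogous bound using (\ref{eq:V+Y}) gives $\int_{x_0}^x Y(x;x_0)Q_+Y^{-1}(s;x_0)g(s)\,ds = O(x)$ near $x=0$. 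Differentiation under the integral sign (Leibniz rule) produces boundary contributions at $s=x$ that sum to $g(x)$ via $Q_++Q_-=E$, while the remainder reassembles $(A/x+B(x))\bar{y}_v$ using $Y'=(A/x+B(x))Y$. Starting from the variation-of-parameters representation $y(x)=Y(x;x_0)u+\int_{x_0}^x Y(x;x_0)Y^{-1}(s;x_0)g(s)\,ds$ of a general solution and shifting the lower limit of the $Q_-$-integral from $x_0$ to $0$ (which costs the convergent correction $w:=\int_0^{x_0}Q_-Y^{-1}(s;x_0)g(s)\,ds$) recovers $y=\bar{y}_{u-w}$, so the parametrization is a bijection onto the set of all solutions.

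The crux is to isolate which $v$ produce $C^1$ solutions. If $y=\bar{y}_v\in C^1([0,\infty))$, then by Proposition~\ref{exC^1sol} the limit $y(+0)=\zeta\in\ker A$ exists; combined with the $O(x)$-bounds above, this forces $Y(x;x_0)v\to\zeta$ as $x\to+0$. By the classification in Section~\ref{sec:AddCond} (properties 1--3 together with the decomposition $\mathbb{R}^n=\mathbb{L}_1\oplus\cdots\oplus\mathbb{L}_6$), the homogeneous solutions that extend continuously to the singular point with limit in $\ker A$ are precisely those with initial data in $\mathbb{V}_+\oplus\mathbb{V}_-^0$, hence $v$ lies there. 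For the converse, Proposition~\ref{exC^1sol} supplies at least one $C^1$ solution of (\ref{eq:nonhom1}), which by the same argument is $\bar{y}_{v^*}$ with $v^*\in\mathbb{V}_+\oplus\mathbb{V}_-^0$. Linearity then gives $\bar{y}_v=\bar{y}_{v^*}+Y(\cdot;x_0)(v-v^*)$; for $v\in\mathbb{V}_+\oplus\mathbb{V}_-^0$ the second summand is a $C^1$ homogeneous solution, and hence $\bar{y}_v\in C^1$.

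For the boundary condition, decompose $v=v_++v_0$ with $v_+\in\mathbb{V}_+$, $v_0\in\mathbb{V}_-^0$. Estimate (\ref{eq:V+Y}) with $s=x_0$ gives $Y(x;x_0)v_+=O(x^{1+\alpha})\to 0$, while property~3 of Section~\ref{sec:AddCond} yields $Y(x;x_0)v_0\to\zeta(v_0)\in\ker A$, where $v_0\mapsto\zeta(v_0)$ is an isomorphism from $\mathbb{V}_-^0$ onto $\ker A$. Since both integrals vanish at $x=0$, we obtain $\bar{y}_v(+0)=\zeta(v_0)$, which is $0$ iff $v_0=0$, i.e.\ iff $v\in\mathbb{V}_+=\mathbb{L}_1\oplus\mathbb{L}_4$. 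The main obstacle is the bootstrap in the previous paragraph: rather than proving directly that the integral terms of $\bar{y}_v$ are $C^1$ at $x=0$ (which would require finer asymptotics than the $O(x)$ estimate), I exploit the already-established existence of at least one $C^1$ solution from Proposition~\ref{exC^1sol} and then propagate the $C^1$-property through the affine family via linearity.
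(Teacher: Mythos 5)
Your overall architecture is close to the paper's: the $O(x)$ bounds on the two integrals via \eqref{eq:V+Y}--\eqref{eq:V-Y}, the direct (Leibniz) verification that $\bar y_v$ solves \eqref{eq:nonhom1}, the reduction of the $C^1$ question to a comparison with a solution supplied by Proposition~\ref{exC^1sol}, and the computation $\bar y_v(+0)=\zeta(v_0)$ for the boundary condition all appear in the paper's proof (your explicit variation-of-parameters argument that the family over all $v\in\mathbb R^n$ exhausts the solutions is a welcome addition that the paper leaves implicit).

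The one step that fails as stated is the characterization you invoke in both directions of the $C^1$ argument: ``the homogeneous solutions that extend continuously to the singular point with limit in $\ker A$ are precisely those with initial data in $\mathbb V_+\oplus\mathbb V_-^0$.'' Mere continuous extendability is not enough. If $A$ has an eigenvalue $\lambda$ with $0<\mathrm{Re}\,\lambda<1$ (which conditions \textbf{A}, \textbf{B} permit), then by Proposition~\ref{pr:Fundmatr} the corresponding homogeneous solutions behave like $x^{\lambda}$: they tend to $0\in\ker A$ as $x\to+0$, yet they are not of the form $\zeta+O(x)$, are not $C^1$ at $0$, and their initial data do not lie in $\mathbb V_+\oplus\mathbb V_-^0$. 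What you actually have at your disposal is stronger than convergence: since $\bar y_v\in C^1$ gives $\bar y_v=\zeta+\zeta_1x+o(x)$ by Proposition~\ref{exC^1sol} and the integral terms are $O(x)$, you know $Y(x;x_0)v=\zeta+O(x)$, and it is this rate that must be exploited. The closing move --- the one the paper makes --- is to subtract the homogeneous solution $\big(E+\Theta(x)\big)\zeta$ of Corollary~\ref{cor:exC^1sol}, whose datum lies in $\mathbb V_-^0$, and observe that the remaining homogeneous solution is $O(x)$; by \eqref{eq:V-} a nonzero $\mathbb V_-$-component of its datum would force a lower bound of order $x^{1-\alpha}$, incompatible with $O(x)$, so the remainder has datum in $\mathbb V_+=\mathbb L_1\oplus\mathbb L_4$ and is in fact $O(x^{1+\alpha})=o(x)$, hence $C^1$ with vanishing value and derivative at $0$. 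With this repair both your converse direction and your ``bootstrap'' forward direction go through; without it, the pivotal inclusion $v\in\mathbb V_+\oplus\mathbb V_-^0$ is unjustified.
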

\begin{proof}
In view of the estimates (\ref{eq:V+Y}), (\ref{eq:V-Y}), for any
$x\in[0,x_0)$ the integrals in the formula (\ref{eq:C^1(0,8)})
satisfy the inequalities
\begin{gather*}
\left\|\int\limits_{0}^{x}\!Y(x;x_0)Q_-Y^{-1}(s;x_0)g(s)ds\right\|\leq
m{C}_0 x^{1-\alpha}\int\limits_{0}^{x}s^{\alpha -1}ds =m {C}_0
\frac{x}{\alpha};\\
\left\|\int\limits_{x}^{x_0}Y(x;x_0)Q_+Y^{-1}(s;x_0)g(s)ds \right\|\leq m
{C}_0 x^{1+\alpha}\int\limits_{x}^{x_0}s^{-1-\alpha}ds  \\ \le m {C}_0
\frac{x^{1+\alpha}(x^{-\alpha}-x_0^{-\alpha})}{\alpha}\leq \frac{m {C}_0 x}{\alpha}.
\end{gather*}
By means of direct check, one can easily verify that each function of
the set (\ref{eq:C^1(0,8)}) is a solution to the system
(\ref{eq:nonhom1}). From the definition of $\mathbb V_+,\;\mathbb V_-^0$,
properties of the spaces $\mathbb L_1,\mathbb L_2,\mathbb L_4,\mathbb L_5$ (see
p.~\pageref{page:vlast2}) it follows that for any $v\in \mathbb V_+\oplus
\mathbb V_-^0$ there exists a limit $\lim_{x\to +0}Y(x;x_0)v=:\zeta(v)\in \ker
A$, and $Y(x;x_0)v$ $=\zeta(v)+O(x),\;x\to 0$. Therefore, $\bar{y}_{v}
(x)=\zeta(v)+O(x),\;x\to 0$, and the difference
$\bar{y}_v(x)-y_{\zeta(v)}(x)$, where $y_{\zeta(v)}(x)$ is the solution from
the Proposition~\ref{exC^1sol}, is a solution to the system (\ref{eq:
mainsys}). Moreover, $\|\bar{y}_v(x)-y_{\zeta(v)}(x)\|=O(x),\;x\to 0$. This
implies that $\|\bar{y}_v(x)-y_{\zeta(v)} (x)\|=o(x),\;x\to 0$, and thus,
$\bar{y}_v(x_0)-y_{\zeta(v)} (x_0)\in \mathbb L_1\oplus\mathbb L_4$. Taking into account
the Proposition~\ref{pr:c^1sol-s}, we can conclude that
$\bar{y}_v(x)\in C^1([0,x_0] \mapsto \mathbb R^n)$.

Since each non-trivial solution to the system (\ref{eq: mainsys})
with the initial condition $y_0\in \mathbb L_2\oplus\mathbb L_5$ has a non-zero
limit when $x\to +0$, the equality $\bar{y}_v(+0)=0$ is equivalent to
$v\in \mathbb L_1\oplus\mathbb L_4$.
\end{proof}

 It is well known  (see e.g. \cite{DalKre70}) that all solutions to the
 system  (\ref{eq:nonhom1}) which are bounded on the semi-axis $[x_0,\infty)$
 form a family
\[
  \begin{split}
\hat{y}_{u}(x)=&Y(x;x_0)u
+\int\limits_{x_0}^{x}Y(x;x_0)P_-Y^{-1}(s;x_0)g(s)ds\\ -
&\int\limits_{x}^{\infty}Y(x;x_0)P_+Y^{-1}(s;x_0)g(s)ds
  \end{split}
\]
where $u $ is an arbitrary vector from $\mathbb U_-$.

It is also known that the following proposition holds true:
\begin{proposition}\label{pr:yto0} If $g(x)\to 0,\;x\to \infty $,
then $\hat{y}_{u}(x) \to 0,\;x\to \infty.$\end{proposition}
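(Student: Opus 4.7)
The plan is a standard exponential--dichotomy splitting argument, relying entirely on the estimates (\ref{eq:P-Y})--(\ref{eq:P+Y}) and the hypothesis $g(x)\to 0$. I would handle the three terms in the expression for $\hat{y}_u(x)$ separately.

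First, since $u\in \mathbb U_-$, we have $P_-u=u$ and $Y(x_0;x_0)=E$, so (\ref{eq:P-Y}) with $s=x_0$ gives $\|Y(x;x_0)u\|\le C_* e^{-\gamma(x-x_0)}\|u\|\to 0$ as $x\to +\infty$. This disposes of the free term.

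For the third term, fix $\varepsilon>0$ and, using $g(s)\to 0$, choose $T\ge x_0$ such that $\|g(s)\|\le \varepsilon$ for all $s\ge T$. Then for $x\ge T$ the estimate (\ref{eq:P+Y}) yields
\[
\Bigl\|\int_{x}^{\infty}Y(x;x_0)P_+Y^{-1}(s;x_0)g(s)\,ds\Bigr\|
\le \varepsilon \int_{x}^{\infty}C_* e^{-\gamma(s-x)}\,ds
= \frac{C_*\varepsilon}{\gamma}.
\]

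The second (and main) term I would split as $\int_{x_0}^{T}+\int_{T}^{x}$ for $x>T$. The tail piece is estimated exactly as above using (\ref{eq:P-Y}) and $\|g(s)\|\le\varepsilon$ on $[T,x]$, giving a bound of $C_*\varepsilon/\gamma$. The initial piece uses the global bound $\|g(s)\|\le m$ together with (\ref{eq:P-Y}):
\[
\Bigl\|\int_{x_0}^{T}Y(x;x_0)P_-Y^{-1}(s;x_0)g(s)\,ds\Bigr\|
\le m\int_{x_0}^{T}C_* e^{-\gamma(x-s)}\,ds
\le \frac{mC_*}{\gamma}e^{-\gamma(x-T)},
\]
which tends to $0$ as $x\to +\infty$ because $T$ is fixed. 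Hence there is $x_1\ge T$ with this quantity bounded by $\varepsilon$ for $x\ge x_1$.

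Combining the three bounds shows $\limsup_{x\to\infty}\|\hat{y}_u(x)\|\le (2C_*/\gamma+1)\varepsilon$, and since $\varepsilon$ is arbitrary, $\hat{y}_u(x)\to 0$ as $x\to+\infty$. I do not anticipate a genuine obstacle here; the only delicate point is the order in which one chooses the parameters, namely first $\varepsilon$, then $T=T(\varepsilon)$ (from $g(s)\to 0$), and finally $x_1=x_1(T)$ (to absorb the $[x_0,T]$ tail of the $P_-$--integral).
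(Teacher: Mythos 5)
Your proposal is correct and follows essentially the same route as the paper's own (sketched) proof: the free term decays by the dichotomy estimate, the $P_+$-integral and the tail $\int_{T}^{x}$ of the $P_-$-integral are each bounded by $C_*\varepsilon/\gamma$ after choosing $T$ so that $\|g\|\le\varepsilon$ beyond $T$, and the fixed initial piece $\int_{x_0}^{T}$ is $O(e^{-\gamma x})$. No gaps; your version merely spells out the constants and the order of quantifiers that the paper leaves implicit.
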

\begin{proof} For the sake of completeness let us sketch the proof.

 For an arbitrary $\epsilon >0$ let choose the value $x(\epsilon)>x_0$ in such a way that
$\|g(x)\|<\epsilon $ for any $x>x(\epsilon)$. Then for
$x>x(\epsilon)$ we have
\begin{gather*}
  \hat{y}_u (x)=Y(x;x_0)u +\int_{x_0}^{x(\epsilon)}Y(x;x_0)P_-
  Y^{-1}(s;x_0)g(s)\,ds\\
  +\int_{x(\epsilon)}^{x}Y(x;x_0)P_-Y^{-1}(s;x_0)g(s)\,ds+
  \int_{x}^{\infty}Y(x;x_0)P_+Y^{-1}(s;x_0)g(s)\,ds.
  \end{gather*}
The first addend in this expression tends to zero when $x\to \infty
$, norm of each of the last two addends does not exceed $\epsilon
K/\gamma $, and for the second addend it holds
\begin{gather*}
  \left\|\int_{x_0}^{x(\epsilon)}Y(x;x_0)P_-Y^{-1}(s;x_0)g(s)\,ds\right\|=O(e^{-\gamma x}),\quad x\to \infty.
\end{gather*}
\end{proof}

Now,  to find all  solutions to the system (\ref{eq:nonhom1}) which
satisfy the conditions (\ref{eq:hombc}) we bind parameters
$v\in\mathbb L_1\oplus\mathbb L_4 $ and $u\in \mathbb
L_1\oplus\mathbb L_2\oplus\mathbb L_3$ by means of the equality
$\bar{y}_{v}(x_0)=\hat{y}_{u}(x_0)$ which can be rewritten in the
form
\begin{gather*}
  P_-u -Q_+v
  =\int_{0}^{x_0}Q_-Y^{-1}(s;x_0)g(s)\,ds+\int_{x_0}^{\infty}P_+Y^{-1}(s;x_0)g(s)\,ds
\end{gather*}
or, equivalently,
\begin{gather*}
  (P_1+P_2+P_3)u -(P_1+P_4)v \\ =
  \int_{0}^{x_0}(P_2+P_3+P_5+P_6)Y^{-1}(s;x_0)g(s)\,ds \\
  +\int_{x_0}^{\infty}(P_4+P_5+P_6)Y^{-1}(s;x_0)g(s)\,ds.
\end{gather*}
From this it follows that
\begin{gather*}
  P_1u =P_1v,\quad P_2u =\int_{0}^{x_0}P_2Y^{-1}(s;x_0)g(s)\,ds,\\
  P_3u =\int_{0}^{x_0}P_3Y^{-1}(s;x_0)g(s)\,ds,\quad
   P_4v =-\int_{x_0}^{\infty}P_4Y^{-1}(s;x_0)g(s)\,ds,
   \end{gather*}
and the function $g(x)$ must satisfy the additional condition
\begin{equation}\label{eq:ortcond}
  \int_{0}^{\infty}(P_5+P_6)Y^{-1}(s;x_0)g(s)\,ds=0.
\end{equation}

Therefore, if the condition (\ref{eq:ortcond}) holds true, the
solutions to the problem (\ref{eq:nonhom1})--(\ref{eq:hombc}) can be
given by the formula
\begin{equation}\label{eq:famrbs}
\begin{split}
  y&=Y(x;x_0)\biggl(P_1v +\int_{x_0}^{x}P_1Y^{-1}(s;x_0)g(s)\,ds\\
  &+\int_{0}^{x}(P_2+P_3)Y^{-1}(s;x_0)g(s)\,ds\\ &-
  \int_{x}^{\infty}(P_4+P_5+P_6)Y^{-1}(s;x_0)g(s)\,ds\biggr).
 \end{split}
\end{equation}
This formula can also be rewritten in the following way:
\[
  \begin{split}
  y&=Y(x;x_0)\biggl(P_1v-\int_{x_0}^{\infty}P_4Y^{-1}(s;x_0)g(s)\,ds
  \\&+\int_{x_0}^{x}Q_+Y^{-1}(s;x_0)g(s)\,ds
  +\int_{0}^{x}Q_-Y^{-1}(s;x_0)g(s)\,ds\biggr).
 \end{split}
\]

 Having defined the sets
\begin{gather*}
  D:=\{(x,s):0< x< s< x_0\}\cup
    \{(x,s):x_0\le s\le x  \},\\ D_+:=\{(x,s):0<s\le x\},
    \quad D_-:=\{(x,s):0<x<s\}
\end{gather*}
and the functions
\begin{align*} G_1(x,s):=&
  \begin{cases}
    \phantom{-}Y(x;x_0)P_1Y^{-1}(s;x_0), & (x,s)\in D\cap D_+, \\
    -Y(x;x_0)P_1Y^{-1}(s;x_0), & (x,s)\in D\cap D_-,\\
    \phantom{-}0, & (x,s)\in (D_+\cup D_-)\setminus D,
  \end{cases}\\
 G_2(x,s):=&
\begin{cases}
    \phantom{-}Y(x;x_0)(P_2+P_3)Y^{-1}(s;x_0), & (x,s)\in D_+,\\
    -Y(x;x_0)(P_4+P_5+P_6)Y^{-1}(s;x_0), & (x,s)\in D_-,
  \end{cases}\\
  G(x,s):=&G_1(x,s)+G_2(x,s),
  \end{align*}
and taking into account the formula (\ref{eq:famrbs}) we get the
following result.
\begin{proposition}\label{pr:famrbs}
There exists a solution to the boundary value problem
(\ref{eq:nonhom1})--(\ref{eq:hombc}) iff the condition
(\ref{eq:ortcond}) holds true, and in this case all soluti\-ons to
the problem are defined by the formula
\begin{gather*}
  y=Y(x;x_0)v +\int_{0}^{\infty}G(x,s)g(s)\,ds,\quad \forall v \in \mathbb L_1.
\end{gather*}
\end{proposition}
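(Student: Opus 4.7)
The plan is to consolidate the construction that precedes the statement into a clean necessary-and-sufficient argument. First, I would observe that every candidate solution $y(\cdot)$ to (\ref{eq:nonhom1})--(\ref{eq:hombc}) must, on $[0,x_0]$, be a member of the family $\bar y_v$ from Proposition~\ref{pr:c^1sol-s} with $v\in\mathbb V_+$ (since $\bar y_v(+0)=0$ forces $v\in\mathbb L_1\oplus\mathbb L_4=\mathbb V_+$), and, on $[x_0,\infty)$, be of the form $\hat y_u$ with $u\in\mathbb U_-$ (since boundedness combined with the exponential dichotomy on $[x_0,\infty)$ forces $u$ to lie in the stable subspace, and decay at infinity then follows automatically from Proposition~\ref{pr:yto0}). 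Thus solutions of the BVP are in one-to-one correspondence with pairs $(v,u)\in\mathbb V_+\times\mathbb U_-$ satisfying the matching condition $\bar y_v(x_0)=\hat y_u(x_0)$.

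Next, the matching equation $\bar y_v(x_0)=\hat y_u(x_0)$, already rewritten in the excerpt as $P_-u-Q_+v=\int_0^{x_0}Q_-Y^{-1}(s;x_0)g(s)\,ds+\int_{x_0}^\infty P_+Y^{-1}(s;x_0)g(s)\,ds$, would be split using the six pairwise disjoint projectors $P_1,\ldots,P_6$. Since the left-hand side has no component in $\mathrm{Im}\,P_5\oplus\mathrm{Im}\,P_6$, solvability of this finite-dimensional linear equation forces exactly the orthogonality condition (\ref{eq:ortcond}), which yields \emph{necessity}. The remaining projections determine $P_2u$, $P_3u$, and $P_4v$ uniquely, while $P_1u=P_1v$ is left as a free parameter ranging over $\mathbb L_1$.

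For \emph{sufficiency}, assuming (\ref{eq:ortcond}) holds, I would substitute the resulting expressions for $P_iu$ and $P_iv$ back into $\bar y_v$ to obtain the one-parameter family (\ref{eq:famrbs}); $C^1$ regularity at $0$ is inherited from Proposition~\ref{pr:c^1sol-s}, and decay at infinity from Proposition~\ref{pr:yto0}. Finally, noting that $v\in\mathbb L_1$ implies $P_1v=v$, I would regroup the four integrals in (\ref{eq:famrbs}) by splitting the domain according to the cases $s\lessgtr x$ and $s\lessgtr x_0$; matching term by term against the piecewise definitions of $G_1$ and $G_2$ delivers the Green-function representation $y=Y(x;x_0)v+\int_0^\infty G(x,s)g(s)\,ds$.

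The main obstacle, modest as it is, lies in the bookkeeping of this last step: the piecewise formula for $G_1$ must be verified separately in the two cases $x<x_0$ and $x\ge x_0$, so that the integral $\int_{x_0}^{x}P_1Y^{-1}(s;x_0)g(s)\,ds$ appearing in (\ref{eq:famrbs}) reproduces both the sign change of $G_1$ across $s=x$ within $D$ and its vanishing on $(D_+\cup D_-)\setminus D$. All other ingredients are direct transcriptions of identities already established before the statement.
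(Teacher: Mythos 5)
Your proposal is correct and follows essentially the same route as the paper: it reduces the problem to matching the family $\bar{y}_v$ (with $v\in\mathbb L_1\oplus\mathbb L_4$, from Proposition~\ref{pr:c^1sol-s}) against the bounded family $\hat{y}_u$ (with $u\in\mathbb U_-$) at $x=x_0$, projects the matching equation by $P_1,\dots,P_6$ to extract the condition (\ref{eq:ortcond}) from the $P_5,P_6$ components and the free parameter from $P_1u=P_1v$, and then regroups the resulting integrals into the piecewise kernels $G_1$, $G_2$. This is precisely the construction the paper carries out in the passage preceding the statement, so no further commentary is needed.
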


Now we are going to interpret the condition (\ref{eq:ortcond}) in
terms of soluti\-ons to the adjoint (with respect to the scalar
product $\langle \cdot,\cdot \rangle $) homo\-ge\-ne\-ous system
\begin{equation}\label{eq:conjsys}
\eta '=-\Big(\frac{A^*}{x}+B^*(x)\Big)\eta.
\end{equation}
Let $\eta(x,\eta_0)$ denote the solution to this system satisfying
the initial condition $\eta(x_0,\eta_0)$ $=\eta_0$. In what follows,
without loss of generality we assume that the scalar product in
$\mathbb R^n$ is determined in such a way that
$P_j^*=P_j,\;j=1,\ldots,6$.

Let $L_1([0,\infty) \mapsto \mathbb R^n)$ be the space of functions $f(\cdot):[0,\infty)
\mapsto \mathbb R^n$ for which $\int_{0}^{\infty}\|f(x)\|\,dx<\infty $.
\begin{proposition}\label{pr:intsol}
The solution $\eta(x,\eta_0)$ belongs to $L_1\big([0,\infty) \mapsto \mathbb R^n\big)$
iff $\eta_0\in \mathbb L_5\oplus\mathbb L_6$.
\end{proposition}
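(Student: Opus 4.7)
The plan is to split the analysis of $\|\eta(x,\eta_0)\|$ on $[0,\infty)$ into the tail at $+\infty$ and the singular behavior at $+0$; in each case I prove the necessary direction by a duality argument based on the constancy of $\langle y(x,y_0),\eta(x,\eta_0)\rangle=\langle y_0,\eta_0\rangle$ (where $y(x,y_0)$ solves (\ref{eq: mainsys})), and the sufficient direction by a direct estimate on the fundamental operator. Throughout I would work with the representation $\eta(x,\eta_0)=[Y^{-1}(x;x_0)]^{*}\eta_0$, so that the bounds (\ref{eq:V+Y})--(\ref{eq:P+Y}) can be fed directly into the analysis, and I would invoke the normalization $P_j^{*}=P_j$ to turn the decomposition $\mathbb R^n=\mathbb L_1\oplus\cdots\oplus\mathbb L_6$ into an orthogonal one in the final combination step.

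For the tail at $+\infty$, setting $x=x_0$ in (\ref{eq:P+Y}) yields $\|P_+Y^{-1}(s;x_0)\|\le C_{*}e^{-\gamma(s-x_0)}$; since $P_+^{*}=P_+$, the identity $\eta(s,\eta_0)=[P_+Y^{-1}(s;x_0)]^{*}\eta_0$ for $\eta_0\in\mathbb U_+$ gives exponential decay and hence integrability near infinity. Conversely, if $P_-\eta_0\ne 0$ I take $y_0:=P_-\eta_0\in\mathbb U_-$: estimate (\ref{eq:U-}) gives $\|y(x,y_0)\|\le C_{*}e^{-\gamma(x-x_0)}\|y_0\|$, while $\langle y_0,\eta_0\rangle=\|P_-\eta_0\|^{2}>0$, so Cauchy--Schwarz forces $\|\eta(x,\eta_0)\|\ge c\,e^{\gamma(x-x_0)}$ for some $c>0$, contradicting integrability at infinity. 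Thus integrability at $+\infty$ is equivalent to $\eta_0\in\mathbb U_+=\mathbb L_4\oplus\mathbb L_5\oplus\mathbb L_6$.

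For the singularity at $+0$, Proposition~\ref{pr:Fundmatr} together with $Y(x;x_0)=Y(x)Y^{-1}(x_0)$ yields $\eta(x,\eta_0)=[(E+U(x))^{-1}]^{*}\,x^{-A^{*}}\xi$ with $\xi:=Y^{*}(x_0)\eta_0$ and $[(E+U(x))^{-1}]^{*}$ uniformly bounded on $(0,x_0]$. Using the identification $\mathbb V_+=Y(x_0)W_>$, where $W_>$ is the $A$-invariant spectral subspace for eigenvalues of real part $>1$ (the identification underlying (\ref{eq:V+})), the condition $\eta_0\perp\mathbb V_+$ translates into $\xi\in W_>^{\perp}$, which by condition \textbf{A} coincides with the $A^{*}$-invariant spectral subspace for eigenvalues of real part $<1$. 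Standard Jordan-block estimates then produce $\|x^{-A^{*}}\xi\|\le C\|\xi\|\,x^{\beta}(1+|\ln x|)^{r}$ with some $\beta>-1$, which is integrable on $(0,x_0]$. For the necessity at $0$ I reverse the duality with $y_0:=P_4\eta_0\in\mathbb L_4\subset\mathbb V_+$: (\ref{eq:V+Y}) with $s=x_0$ gives $\|y(x,y_0)\|\le C_0(x/x_0)^{1+\alpha}\|y_0\|$, and $\langle y_0,\eta_0\rangle=\|P_4\eta_0\|^{2}>0$ then forces $\|\eta(x,\eta_0)\|\ge c\,x^{-1-\alpha}$ near $0$, destroying integrability on $(0,x_0]$.

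Combining the two criteria, integrability on $[0,\infty)$ is equivalent to $\eta_0\in\mathbb U_+\cap\mathbb V_+^{\perp}$; mutual orthogonality of the $\mathbb L_j$ (a consequence of $P_j^{*}=P_j$) then yields $(\mathbb L_4\oplus\mathbb L_5\oplus\mathbb L_6)\cap(\mathbb L_1\oplus\mathbb L_4)^{\perp}=\mathbb L_5\oplus\mathbb L_6$, which is the asserted characterization. The main obstacle I anticipate is the sufficiency at $+0$: one has to translate the geometric condition $\eta_0\perp\mathbb V_+$ into a spectral condition on $\xi=Y^{*}(x_0)\eta_0$ via $\mathbb V_+=Y(x_0)W_>$, and then carry out the Jordan-structure bookkeeping for $x^{-A^{*}}$ on the corresponding $A^{*}$-invariant subspace to get a bound of type $x^{\beta}(1+|\ln x|)^{r}$ with $\beta>-1$; the remaining three assertions follow the same duality pattern and are comparatively direct.
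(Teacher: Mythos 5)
Your argument is correct and rests on the same two pillars as the paper's proof: the invariance of the pairing $\langle y(x,y_0),\eta(x,\eta_0)\rangle=\langle y_0,\eta_0\rangle$ for the necessity directions, and direct bounds on $[Y^{-1}(x;x_0)]^*$ for sufficiency. The one place you genuinely diverge is the sufficiency at the origin, which you flag as your main obstacle: you return to Proposition~\ref{pr:Fundmatr}, identify $\mathbb V_+$ with the image under $Y(x_0)$ of the spectral subspace of $A$ for eigenvalues with real part $>1$, pass to the complementary spectral subspace of $A^*$, and redo Jordan-block estimates for $x^{-A^*}$. The paper short-circuits all of this: for $\eta_0=(P_5+P_6)\eta_0$ it writes $\|\eta(x,\eta_0)\|\le\|[Y^{-1}(x;x_0)]^*(P_5+P_6)\|\,\|\eta_0\|$ and reads off $C_0(x_0/x)^{1-\alpha}$ from (\ref{eq:V-Y}) and $C_*e^{-\gamma(x-x_0)}$ from (\ref{eq:P+Y}); since $1-\alpha<1$ the first bound is integrable at $0$, and no return to the spectral structure of $A$ is needed. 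Your route proves slightly more (integrability near $0$ on all of $\mathbb V_+^{\perp}$, not only $\mathbb L_5\oplus\mathbb L_6$), but at the cost of spectral bookkeeping that the already-established estimate (\ref{eq:V-Y}) makes superfluous. One small repair: in the necessity at $0$ you should test against $y_0:=(P_1+P_4)\eta_0=Q_+\eta_0$ rather than $P_4\eta_0$ alone, since $\eta_0\not\perp\mathbb V_+$ can occur with $P_4\eta_0=0$ and $P_1\eta_0\ne 0$; this is harmless for your final intersection (a nonzero $P_1$-component already destroys integrability at infinity via your $\mathbb U_-$ argument, exactly as in the paper's case analysis built around $y_0:=(P_1+\cdots+P_4)\eta_0$), but as stated your standalone claim that integrability at $0$ is equivalent to $\eta_0\perp\mathbb V_+$ leaves that case unproved.
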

\begin{proof}  As is well known, $[Y^{-1}(x;x_0)\big]^*$ is a
fundamental operator of the adjoint system normalized at the point
$x_0$, and $$\langle \eta(x,\eta_0),y(x,y_0)\rangle \equiv\langle
\eta_0,y_0\rangle.$$ Let $y_0:=(P_1+\cdots +P_4)\eta_0\ne0$. If, in
addition, we suppose that $Q_+y_0\ne 0$, then in view of
\eqref{eq:V+Y}
\begin{gather*}
 \|Q_+y_0\|^2\le \|y(x,Q_+ y_0)\|\|\eta(x,\eta_0)\|\le
  c_0(x/x_0)^{1+\alpha}\|Q_+ y_0\|\|\eta(x,\eta_0)\|\quad
  \end{gather*}
for all $x\in (0,x_0]$, and thus $\|\eta(x,\eta_0)\|\ge \|Q_+
y_0\|(x/x_0)^{-1-\alpha}/c_0$ when $x\in (0,x_0]$. This implies that
$\eta(x,\eta_0)\not\in L_1([0,\infty) \mapsto \mathbb R^n)$.

If $Q_+y_0=0,$ then $y_0=(P_2+P_3)y_0=P_-y_0\ne0$. Hence, in view of
\eqref{eq:P-Y}
\begin{gather*}
\|P_-y_0\|^2\le \|y(x,P_-y_0)\|\|\eta(x,\eta_0)\|
 \le c_* e^{-\gamma (x-x_0)}\|P_-y_0\|\|\eta(x,\eta_0)\|
\end{gather*}
for all $x\ge x_0$. This also implies that  $\eta(x,\eta_0)\not\in
L_1([0,\infty) \mapsto \mathbb R^n)$.

On the other hand, if $y_0=0$, then $\eta_0=(P_5+P_6)\eta_0$. Now from
the inequalities (\ref{eq:V-Y}) and (\ref{eq:P+Y}) it follows that
\begin{gather*}
  \|\eta(x,\eta_0)\|\le \big\|\big[Y^{-1}(x;x_0)\big]^*(P_5+P_6)\big\|\|\eta_0\|
  \leq
C_0\left(\frac{x_0}{x}\right)^{1-\alpha}\|\eta_0\|
\end{gather*}
when $x\in (0, x_0]$, and
\begin{gather*}
\|\eta(x,\eta_0)\|\le
\big\|\big[Y^{-1}(x;x_0)\big]^*(P_5+P_6)\big\|\|\eta_0\|\leq C_* e^{-\gamma
(x-x_0)}\|\eta_0\|
\end{gather*}
when $x\geq x_0$.  Hence, $\eta(x,\eta_0)$ belongs to $L_1\big([0,\infty)
\mapsto \mathbb R^n\big)$.
\end{proof}

The above proof has the following  corollary:
\begin{proposition}\label{pr:ortcond}
The condition (\ref{eq:ortcond}) holds true iff the function $g(x)$
is ortho\-gonal (in sense of the scalar product $\langle \cdot,\cdot
\rangle_{L_2}:= \int_{0}^{\infty}\langle \cdot,\cdot \rangle \,dx$)
to each solution of the adjoint system (\ref{eq:conjsys}) belonging
to the space $L_1([0,\infty) \mapsto \mathbb R^n)$.
\end{proposition}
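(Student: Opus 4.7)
The plan is to reduce the claimed equivalence to a simple duality computation using Proposition~\ref{pr:intsol} and the self-adjointness of the projectors $P_j$. First I would record the explicit form of solutions to the adjoint system~(\ref{eq:conjsys}): since $\bigl[Y^{-1}(x;x_0)\bigr]^*$ is the adjoint fundamental operator normalized at $x_0$, one has $\eta(x,\eta_0)=\bigl[Y^{-1}(x;x_0)\bigr]^*\eta_0$. By Proposition~\ref{pr:intsol}, the $L_1$-solutions are exactly those with $\eta_0\in \mathbb L_5\oplus\mathbb L_6$, i.e. $\eta_0=(P_5+P_6)\eta_0$.

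Next I would compute the $L_2$-pairing. Using $P_j^*=P_j$, for every $\eta_0\in\mathbb L_5\oplus\mathbb L_6$,
\begin{gather*}
  \langle g,\eta(\cdot,\eta_0)\rangle_{L_2}
  =\int_0^\infty\bigl\langle g(s),\bigl[Y^{-1}(s;x_0)\bigr]^*(P_5+P_6)\eta_0\bigr\rangle\,ds
  =\Bigl\langle \int_0^\infty(P_5+P_6)Y^{-1}(s;x_0)g(s)\,ds,\,\eta_0\Bigr\rangle.
\end{gather*}
Fubini's theorem (equivalently, interchange of the inner product with the integral) is justified because $g$ is bounded and, by Proposition~\ref{pr:intsol}, $\eta(\cdot,\eta_0)\in L_1$, so the integrand is absolutely integrable.

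The final step is to observe that the integral $I:=\int_0^\infty(P_5+P_6)Y^{-1}(s;x_0)g(s)\,ds$ itself lies in the subspace $\mathbb L_5\oplus\mathbb L_6$, because the integrand does. Hence $\langle I,\eta_0\rangle=0$ for every $\eta_0\in\mathbb L_5\oplus\mathbb L_6$ is equivalent to $I=0$, which is exactly the condition~(\ref{eq:ortcond}). Conversely, if (\ref{eq:ortcond}) holds, then the displayed pairing vanishes for every $\eta_0\in\mathbb L_5\oplus\mathbb L_6$, i.e. $g$ is $L_2$-orthogonal to every $L_1$-solution of the adjoint system.

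The only mildly delicate point is the application of Fubini, which requires knowing that $\eta(\cdot,\eta_0)\in L_1$ when $\eta_0\in\mathbb L_5\oplus\mathbb L_6$; this is precisely Proposition~\ref{pr:intsol}, so the argument goes through. Everything else is a routine identification of dual pairings and projections.
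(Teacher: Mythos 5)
Your argument is correct and is essentially the paper's own: the paper presents this proposition as an immediate corollary of the proof of Proposition~\ref{pr:intsol}, relying on exactly the duality computation you spell out --- the identification $\eta(x,\eta_0)=\bigl[Y^{-1}(x;x_0)\bigr]^*\eta_0$, the normalization $P_j^*=P_j$, and the observation that the integral in (\ref{eq:ortcond}) lies in $\mathbb L_5\oplus\mathbb L_6$, so that vanishing of all pairings forces the integral itself to vanish. There are no gaps; you have merely written out the routine step the paper leaves implicit.
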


Now, let us show that the problem
(\ref{eq:nonhom1})--(\ref{eq:hombc}) has a generalized Green
function $\mathfrak G(x,s)$ defined by the following properties:

1. For any $s>0$ and $x\in [0,\infty)\setminus \{s\}$, it holds
\[
  \mathfrak L\mathfrak G(x,s)=-F(x;x_0)\Pi Y^{-1}(s;x_0)
\]
where $\mathfrak L:=\frac{d}{dx}-\big(\frac{A}{x}+B(x)\big)$, $\Pi :=P_5+P_6$,
and $F(\cdot,x_0)\in C\bigl([0,\infty)$ $\mapsto$ $ \mathrm{Hom}(\mathbb R^n)\bigr)$ is a
bounded mapping with the "biorthonormality" proper\-ty with respect
to the space of solutions of the adjoint system which belong to
$L_1([0,\infty) \mapsto \mathbb R^n)$:
\begin{gather*}
  \int_{0}^{\infty}\Pi Y^{-1}(x;x_0)F(x;x_0)\,dx=\Pi.
\end{gather*}
E.g., we may set
\begin{gather*}
  F(x;x_0):=\frac{\kappa^{1+\beta}x^\beta}{\Gamma(1+\beta)e^{\kappa  x}} Y(x;x_0)\Pi
  \end{gather*}
where $\kappa$ is an arbitrary number greater than $\gamma $ and
$\beta > 0$ is an arbitrary number with the property that all real
parts of eigenvalues of the matrix $A$ exceed $-\beta $. Obviously,
$F(+0;x_0)=F(+\infty;x_0)=0$.

 2. For any $x>0$ the unit jump property is valid:
$\mathfrak G(x+0,x)-\mathfrak G(x-0,x)=E$.

3. The condition of orthogonality to the space of solutions to the
corres\-ponding homogeneous boundary value problem is fulfilled:
$$\int_{0}^{\infty}P_1Y^*(x;x_0)\mathfrak G(x,s)\,dx=0.$$

4. For any $s>0$ the boundary conditions $\mathfrak
G(+0,s)=\mathfrak G(+\infty,s)=0$ are satisfied.

5. For any $g(\cdot)\in C([0,\infty) \mapsto \mathbb R^n)$ satisfying
\eqref{eq:ortcond}, the boundedness condition holds true:
$$\sup_{x\in [0,\infty)}\int_{0}^{\infty}\|\mathfrak G(x,s)g(s)\|\,ds <\infty. $$

Observe that the operator differential equation  $\mathfrak
LY=-F(x;x_0)$ has a particular solution
\begin{gather*}
Y=N(x;x_0):=-\int_{0}^{\infty}G(x,t)F(t;x_0)\,dt
\end{gather*}
which can be represented in the form
\begin{gather*}
 N(x;x_0)= Y(x;x_0)
\bigg(\Pi-\int_{0}^{x}\frac{\kappa^{1+\beta}t^\beta}{\Gamma(1+\beta)e^{\kappa t}}\,dt \Pi
\bigg).
\end{gather*}
(Note that generally $N(x;x_0)$ is unbounded on $(0,x_0)$, but it
vanishes at infinity.)

It is easily seen that the conditions 1--4 hold true for the operator
\begin{gather}\label{eq:gengrfunc}
\mathfrak G(x,s):=G(x,s)+Y(x;x_0)P_1M(s;x_0)+N(x;x_0)\Pi
Y^{-1}(s;x_0)
\end{gather}
once we set
\begin{gather*}
  M(s;x_0):= -
  \bigg[\int_{0}^{\infty}\!\!\!P_1Y^*(x;x_0)Y(x;x_0)P_1\,dx \bigg|_{\mathbb L_1}
  \bigg]^{-1}
\cdot \\ \cdot \int_{0}^{\infty}\!\!\!P_1Y^*(x;x_0)\big(
G(x,s)+N(x;x_0)\Pi Y^{-1}(s;x_0)\big)\,dx.
\end{gather*}
To show that the condition 5 is fulfilled it remains only to verify
that $M(s;x_0)$ is absolutely integrable on $[0,\infty)$. This property
 can be easily obtained from the next six estimates for the function
$$J(s;x;x_0):=\big\|P_1Y^*(x;x_0)(G(x,s)+N(x;x_0)\Pi
Y^{-1}(s;x_0))\big\|$$ which are based on inequalities
\eqref{eq:V+Y}--\eqref{eq:P+Y}.

1) Let $x<s<x_0.$ In this case $G(x,s) =
-Y(x;x_0)(P_1+P_4+\Pi)Y^{-1}(s;x_0),$ and therefore there exits a
constant $C_1(x_0)>0$ such that
\begin{gather*}
J(s;x;x_0)  \leq
\|Y(x;x_0)P_1\|\bigl(\|Y(x;x_0)(P_1+P_4)Y^{-1}(s;x_0)\| \\ + (\kappa
x)^{1+\beta}\|Y(x;x_0)\Pi Y^{-1}(s;x_0)\|\bigr)
\\ \leq
C_1(x_0)x^{1+\alpha}\bigl(({x}/{s})^{1+\alpha}+ x s^{\alpha-1}\bigr)\le
C_1(x_0)x^{1+\alpha}(1+s^{\alpha}).\end{gather*}
%-----------------------------------------------------------

2) Let $s\leq x<x_0.$ Now $G(x,s) = Y(x;x_0)(P_2+P_3)Y^{-1}(s;x_0)$,
and there exists a constant $C_2(x_0)>0$ such that
\begin{gather*}
J(s;x;x_0) \leq \|Y(x;x_0)P_1\|\bigl(\|Y(x;x_0)Q_-Y^{-1}(s;x_0)\|\\ +
(\kappa x)^{1+\beta}\|Y(x;x_0)\Pi Y^{-1}(s;x_0)\|\bigr) \leq
C_2(x_0)x^2s^{\alpha-1}.
\end{gather*}
%-----------------------------------------------------------

3) Let $s<x_0\le x.$ Now $G(x,s) = Y(x;x_0)(P_2+P_3)Y^{-1}(s;x_0)$,
hence,
 \begin{gather*} J(s;x;x_0) \leq
\|Y(x;x_0)P_1\|\bigl(\|Y(x;x_0)(P_2+P_3)Y^{-1}(s;x_0)\|\\ +
\|N(x;x_0)\Pi Y^{-1}(s;x_0)\|\bigr)  \\ \leq
C_0C_*e^{-\gamma(x-x_0)}\left(\frac{x_0}{s}\right)^{1-\alpha}\bigl(C_*+
\sup_{x\in[x_0,\infty)}\|N(x;x_0)\|\bigr)\\ \le C_3(x_0)e^{-\gamma x}s^{\alpha
-1}
\end{gather*}
for some constant $C_3(x_0)>0$.

 4) Let $x< x_0\le s.$ Since $G(x,s) =
-Y(x;x_0)(P_4+\Pi)Y^{-1}(s;x_0),$ then there exists a constant
$C_4(x_0)>0$ such that
\begin{gather*}
J(s;x;x_0)\leq \|Y(x;x_0)P_1\|\bigl(\|Y(x;x_0)P_4Y^{-1}(s;x_0)\| \\
+(\kappa x)^{1+\beta}\|Y(x;x_0)\Pi Y^{-1}(s;x_0)\|\bigr) \leq C_4(x_0)
x^{1+\alpha} e^{-\gamma s}.\end{gather*}
%-----------------------------------------------------------

 5) Let $x_0\le x<s.$ In this case we also have $G(x,s) = -
Y(x;x_0)\cdot (P_4+\Pi)Y^{-1}(s;x_0)$. Hence,
\begin{gather*}
J(s;x;x_0)\leq \|Y(x;x_0)P_1\|\bigl(\|Y(x;x_0)P_4Y^{-1}(s;x_0)\|\\ +
\|Y(x;x_0)\Pi Y^{-1}(s;x_0)\|\bigr)  \leq
2C_*^2e^{-\gamma(x-x_0)}e^{-\gamma(s-x)}= C_5(x_0)e^{-\gamma s}
\end{gather*}
where $C_5(x_0):=2C_*^2e^{\gamma x_0}$.
%-----------------------------------------------------------

 6) Finally, let $x_0\le s \le x.$ Now $G(x,s) =
Y(x;x_0)(P_1+P_2+P_3)Y^{-1}(s;x_0),$ and there exists a constant
$C_6(x_0)>0$ such that
\begin{gather*}
J(s;x;x_0) \leq
\|Y(x;x_0)P_1\|\bigl(\|Y(x;x_0)(P_1+P_2+P_3)Y^{-1}(s;x_0)\|\\ +
\|N(x;x_0)\Pi Y^{-1}(s;x_0)\|\bigr) \leq
C_6(x_0)\left(e^{-\gamma(2x-s)}+e^{-\gamma(x+s)}\right).
\end{gather*}

The above arguments  prove  the following theorem.
\begin{theorem}\label{th:exsolhombvp}
There exists a solution to the boundary value problem
(\ref{eq:nonhom1})--(\ref{eq:hombc}) iff the function $g(x)$ is
orthogonal (in terms of the scalar product $\langle \cdot,\cdot \rangle_{L_2}:=
\int_{0}^{\infty}\langle \cdot,\cdot \rangle \,dx$) to all solutions to the adjoint system
(\ref{eq:conjsys}) which belong to $L_1([0,\infty) \mapsto \mathbb R^n)$. If the
orthogonality condition holds true, then the problem
(\ref{eq:nonhom1})--(\ref{eq:hombc}) has the family of solutions
which can be represented as the sum of two mutually orthogonal
compo\-nents
\begin{gather*}
  y=Y(x;x_0)v +\int_{0}^{\infty}\mathfrak G(x,s)g(s)\,ds
\end{gather*}
where $v \in \mathbb L_1$ is an arbitrary vector and $\mathfrak G(x,s)$ is the
generalized Green function defined by (\ref{eq:gengrfunc}).
\end{theorem}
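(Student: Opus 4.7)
The proof is essentially an assembly of the work already accomplished: the existence criterion has been established in Proposition \ref{pr:famrbs}, its reformulation in terms of adjoint solutions in $L_1$ is Proposition \ref{pr:ortcond}, and the bulk of the labor — the construction of the generalized Green function $\mathfrak{G}(x,s)$ together with the verification of properties 1--5 via the six estimates for $J(s;x;x_0)$ — has already been carried out. The plan, therefore, is to package these pieces and to verify that the two assertions of the theorem (the representation formula and the mutual orthogonality of the two components) are direct consequences.

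For the Fredholm-type alternative, I combine Propositions \ref{pr:famrbs} and \ref{pr:ortcond}: the former says solvability is equivalent to (\ref{eq:ortcond}), while the latter restates (\ref{eq:ortcond}) as orthogonality of $g$ in $\langle \cdot,\cdot\rangle_{L_2}$ to every $L_1$-solution of the adjoint system (\ref{eq:conjsys}). For the representation formula, substitute the definition (\ref{eq:gengrfunc}) into $\int_0^\infty \mathfrak{G}(x,s)g(s)\,ds$ and split into three integrals. The third,
\[
N(x;x_0)\int_0^\infty \Pi\, Y^{-1}(s;x_0)g(s)\,ds,
\]
vanishes identically by virtue of (\ref{eq:ortcond}) together with $\Pi=P_5+P_6$. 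The second is of the form $Y(x;x_0)P_1\mathbf{w}$ with $\mathbf{w}:=\int_0^\infty M(s;x_0)g(s)\,ds$, and is therefore absorbed into the free term $Y(x;x_0)v$, with the new free parameter $v+P_1\mathbf{w}$ again ranging over $\mathbb{L}_1$. The first integral reproduces $\int_0^\infty G(x,s)g(s)\,ds$, so the resulting expression coincides with the family provided by Proposition \ref{pr:famrbs}. Thus the formula describes exactly the same set of solutions, only written in a form that exhibits a canonical Green-function representation.

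The mutual orthogonality of $Y(\cdot;x_0)v$ and the integral part follows from property 3 of $\mathfrak{G}$: for any $v\in\mathbb{L}_1=P_1\mathbb{R}^n$,
\[
\int_0^\infty\!\! \Big\langle Y(x;x_0)v,\, \int_0^\infty \mathfrak{G}(x,s)g(s)\,ds\Big\rangle dx
 = \Big\langle v,\, \int_0^\infty \Big[\int_0^\infty P_1 Y^*(x;x_0)\mathfrak{G}(x,s)\,dx\Big]g(s)\,ds\Big\rangle,
\]
and the inner bracketed integral is zero. Fubini applies because property 5 gives the uniform estimate $\sup_x\int_0^\infty\|\mathfrak{G}(x,s)g(s)\|\,ds<\infty$ and the components of $Y(x;x_0)v$ restricted to $\mathbb{L}_1$ decay exponentially by (\ref{eq:P-Y}), so the iterated integrals are absolutely convergent. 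I expect the only genuinely delicate point to be the justification that the constructed $y(x)$ really meets the boundary conditions $y(+0)=y(+\infty)=0$ of (\ref{eq:hombc}); this requires combining the pointwise vanishing stated in property 4 with the uniform integrable majorant from property 5 (and the six $J$-estimates) to pass the limits $x\to+0$ and $x\to+\infty$ under the integral by dominated convergence. All other ingredients are bookkeeping.
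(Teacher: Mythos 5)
Your proposal is correct and follows essentially the same route as the paper, whose ``proof'' of Theorem~\ref{th:exsolhombvp} is precisely the assembly you describe: Propositions~\ref{pr:famrbs} and~\ref{pr:ortcond} for the solvability criterion, the vanishing of the $N(x;x_0)\Pi Y^{-1}(s;x_0)$ term under (\ref{eq:ortcond}), the absorption of the $Y(x;x_0)P_1M(s;x_0)$ term into the free parameter $v\in\mathbb L_1$ (legitimized by the integrability of $M(s;x_0)$ from the six $J$-estimates), and orthogonality of the two components via property~3 of $\mathfrak G$. The boundary-condition check you flag as delicate is in fact immediate once the family is identified with that of Proposition~\ref{pr:famrbs}, whose members already solve (\ref{eq:nonhom1})--(\ref{eq:hombc}).
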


\section{The main theorem}\label{sec:MainRes}
Let us turn back to the main problem of finding solutions to the
system (\ref{eq:l-s-sys}) which possess the properties
(\ref{eq:l-s-bvp}). It is clear that a continuously differentiable on
$[0,\infty)$ solution $y(x)$ to the system (\ref{eq:l-s-sys}), provided
that it exists, must satisfy the equality $Ay(+0)+a=0$. Thus we
require the following condition to hold true:

\textbf{C}: $a\in \mathrm{im}\,A$.

 The orthogonal decomposition  $\mathbb R^n=\mathrm{im}\,A^*\oplus \ker A$
together with the condition \textbf{C} imply existence of the unique
$\eta \in \mathrm{im}\,A^*$ for which $A\eta +a=0$.

Hence, it is naturally to formulate \emph{main boundary value
problem} in the following way: \textsl{ find all $\zeta \in \ker A$ for
which the boundary value problem for the system (\ref{eq:l-s-sys})
with the boundary conditions
\begin{gather*}
  y(+0)=\eta +\zeta,\quad y(\infty)=0
\end{gather*}
is solvable in the class $C^1\big([0,\infty) \mapsto \mathbb
R^n\big)$ and construct an integral represent\-ation of
corresponding solutions.} This problem is resolved by the following
theorem.
\begin{theorem}\label{th:mainthirs}
Let the system (\ref{eq:l-s-sys}) satisfy the conditions
$\mathbf{A}$ -- $\mathbf{C}$ and $f(x)\to 0$ when $x\to +\infty $.
Then the main boundary value problem is solvable iff
\begin{equation}\label{eq:ortcondP6}
  \int_{0}^{\infty}P_6Y^{-1}(x;x_0) \big[f(x)+B(x)\eta \big]\,dx=0.
\end{equation}

Provided that (\ref{eq:ortcondP6}) holds true,  the main boundary
value problem has the family of solutions defined by the formulae
\begin{gather}\label{eq:irs}
y=Y(x;x_0)(v_1+v_2)+\eta +\int_{0}^{\infty}G(x,s)\big(f(s)+B(s)\eta \big)\,ds,\\
  \zeta =(E+U(x_0))^{-1}v_2+(E+\Theta(x_0))^{-1}w\label{eq:zeta}
\end{gather}
where $v_1\in \mathbb L_1,\;v_2\in \mathbb L_2$ are arbitrary
vectors and
\begin{equation}
  \label{eq:ortcondP5}
w:=-\int_{0}^{\infty}P_5Y^{-1}(x;x_0) \big[f(x)+B(x)\eta\big]\,dx.
\end{equation}

There exist positive constants $K_1(\alpha,\gamma,x_0),\,K_2(\alpha,\gamma,x_0)$ such
that
\begin{gather*}
  \int_{0}^{\infty}\bigl\|G(x,s)\big(f(s)+B(s)\eta \big)\bigr\|\,ds\le
K_1(\alpha,\gamma,x_0)\sup_{x\in [0,\infty)}\|f(s)+B(s)\eta \|,\\ \|w\|\le
K_2(\alpha,\gamma,x_0)\sup_{x\in [0,\infty)}\|f(s)+B(s)\eta \|.
\end{gather*}

\end{theorem}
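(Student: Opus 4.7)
The plan is to reduce the main boundary value problem to the homogeneous-BC problem analyzed in Theorem~\ref{th:exsolhombvp} via the substitution $y = \eta + z$. By condition \textbf{C} we have $A\eta + a = 0$, so $(A/x)\eta + a/x = 0$ and the substitution eliminates the singular forcing, yielding $z' = (A/x + B(x))z + g(x)$ with continuous bounded $g(x) := f(x) + B(x)\eta$ and transformed boundary conditions $z(+0) = \zeta \in \ker A$ and $z(+\infty) = -\eta$. Corollary~\ref{cor:exC^1sol} provides the parameterization $y_\zeta(x) = (E+\Theta(x))\zeta$ of the $C^1$-regular $\ker A$-data at $x = 0$, which will carry the $\zeta$-freedom and produce formula (\ref{eq:zeta}).

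The first concrete step is to verify by direct substitution that (\ref{eq:irs}) is a $C^1$ solution of (\ref{eq:l-s-sys}): using the construction of $G$ in Section~\ref{sec:GenGreenFunc} one has $\frac{d}{dx}\int_0^\infty G(x,s)g(s)\,ds = (A/x+B(x))\int_0^\infty G(x,s)g(s)\,ds + g(x)$, and together with $Y'(x;x_0) = (A/x+B(x))Y(x;x_0)$ and the cancellation of $(A/x)\eta$ with $a/x$, this reduces the verification to the algebraic identity $g(x) = f(x) + B(x)\eta$. The next step is to compute the boundary values of (\ref{eq:irs}) using the decomposition $E = P_1 + \cdots + P_6$ and the asymptotic estimates (\ref{eq:V+Y})--(\ref{eq:P+Y}). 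At $x \to +0$: the $P_1$ and $P_4$ components vanish as $\mathbb V_+$-directions; the $P_3$-contribution is automatically $o(1)$ because its defining integral is $\int_0^x$ which shrinks; the $P_6$-contribution involves $\int_x^\infty$, which does not shrink, forcing exactly the orthogonality (\ref{eq:ortcondP6}); the $P_2$-contribution gives $(E+U(x_0))^{-1}v_2$ via the factorization $Y(x;x_0) = (E+U(x))x^A x_0^{-A}(E+U(x_0))^{-1}$ from Proposition~\ref{pr:Fundmatr} (using that $x^A$ acts as the identity on $\ker A$); the $P_5$-contribution gives $(E+\Theta(x_0))^{-1}w$ via the form $y_\zeta = (E+\Theta(x))\zeta$ from Corollary~\ref{cor:exC^1sol}. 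Summing the contributions yields $y(+0) = \eta + \zeta$ with $\zeta$ as in (\ref{eq:zeta}). The $+\infty$ behavior is controlled by the exponential dichotomy (\ref{eq:P-Y})--(\ref{eq:P+Y}) and the argument of Proposition~\ref{pr:yto0}.

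The main obstacle I foresee is reconciling the single orthogonality (\ref{eq:ortcondP6}) required here with the two orthogonalities (against both $P_5$ and $P_6$) required in Theorem~\ref{th:exsolhombvp}. The key observation is that the $P_5$-component of the would-be obstruction is not a rigid constraint on $g$ but rather prescribes a value for $\zeta$: the subspace $\mathbb L_5$ lies inside $\mathbb V_-^0$ and is identified with $\ker A$ via Corollary~\ref{cor:exC^1sol}, so any nonvanishing $w := -\int_0^\infty P_5 Y^{-1}(x;x_0)g(x)\,dx \in \mathrm{Im}\, P_5$ can be absorbed into the choice of $\zeta$ through the $(E+\Theta(x_0))^{-1}w$ summand in (\ref{eq:zeta}); only the direction $\mathbb L_6 \subset \mathbb V_-$, which lies outside $\mathbb V_+ \oplus \mathbb V_-^0$, cannot be absorbed, and yields the genuine Fredholm-type obstruction (\ref{eq:ortcondP6}).

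Finally, the quantitative estimates for $\|w\|$ and for the integral are routine. For $\|w\|$, one uses the pointwise bounds on $\|P_5 Y^{-1}(\cdot;x_0)\|$ implicit in the proof of Proposition~\ref{pr:intsol} (of the type $C_0(x_0/x)^{1-\alpha}$ on $(0,x_0]$ and $C_* e^{-\gamma(x-x_0)}$ on $[x_0,\infty)$, which together are $L^1$) combined with the $L^\infty$-boundedness of $g = f + B\eta$. For the integral $\int_0^\infty \|G(x,s)(f(s)+B(s)\eta)\|\,ds$, one applies the six-case tabulation of $G(x,s)$ from Section~\ref{sec:GenGreenFunc} --- the same case analysis used to bound $J(s;x;x_0)$ in the proof of Theorem~\ref{th:exsolhombvp} --- to obtain a bound $K_1(\alpha,\gamma,x_0)\sup_{s\in[0,\infty)}\|f(s)+B(s)\eta\|$ uniform in $x$.
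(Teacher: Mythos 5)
Your overall strategy --- verify formula (\ref{eq:irs}) directly and read off its boundary values componentwise --- is genuinely different from the paper's, which instead posits the ansatz $y=\eta+\varphi(x)+Y(x;x_0)v+e^{-\kappa x}Y(x;x_0)w+y_0(x)$ with an auxiliary $C^1$ function $\varphi$ satisfying $\varphi(0)=0$, $\varphi(+\infty)=-\eta$, and reduces everything to Theorem~\ref{th:exsolhombvp}. Your key structural observation --- that the $P_5$ obstruction is not a constraint on the data but is absorbed into $\zeta$ because $\mathbb L_5\subset\mathbb V_-^0$ is identified with $\ker A$, leaving only the $P_6$ condition (\ref{eq:ortcondP6}) --- is exactly right and is the heart of the matter.

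However, there is a genuine gap at $x\to+\infty$. You claim the behavior there ``is controlled by the exponential dichotomy and the argument of Proposition~\ref{pr:yto0}'', but Proposition~\ref{pr:yto0} requires the forcing to tend to zero and then yields limit $0$ for the integral term. Here $g=f+B\eta$ does \emph{not} tend to zero ($B$ is merely bounded), and for (\ref{eq:irs}) to satisfy $y(+\infty)=0$ you must show that $\int_0^\infty G(x,s)\big(f(s)+B(s)\eta\big)\,ds\to-\eta$, i.e.\ that the integral cancels the explicit $\eta$ in the formula. This is precisely what the paper's interpolating function $\varphi$ accomplishes: the modified forcing $f+B\eta-\mathfrak L\varphi$ does vanish at infinity (since $\mathfrak L\varphi\to B(x)\eta$), so Theorem~\ref{th:exsolhombvp} applies, and the identity $\int_0^\infty G(x,s)\mathfrak L\varphi(s)\,ds=\varphi(x)-Y(x;x_0)P_1\varphi(x_0)$ then converts the answer into (\ref{eq:irs}). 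Without this device, or an equivalent computation, your verification of the boundary condition at infinity fails. A second gap concerns necessity and exhaustiveness: your argument shows that (\ref{eq:irs}) gives solutions when (\ref{eq:ortcondP6}) holds and that this particular formula misbehaves at $x=0$ when it fails, but the ``only if'' and the claim that (\ref{eq:irs})--(\ref{eq:zeta}) describe \emph{all} solutions require reducing an arbitrary $C^1([0,\infty))$, vanishing-at-infinity solution to the matching argument of Section~\ref{sec:GenGreenFunc} (the families $\bar y_v$ and $\hat y_u$); relatedly, the problem demands $y\in C^1([0,\infty))$, which is more than the existence of the limit $y(+0)$ that your componentwise computation provides.
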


\begin{proof}
We seek the solution to the problem
(\ref{eq:l-s-sys})--(\ref{eq:l-s-bvp}) in the form
\begin{gather}\label{eq:y-to-z}
  y=\eta +\varphi(x)+Y(x;x_0)v+e^{-\kappa x}Y(x;x_0)w +y_0(x)
\end{gather}
where $\kappa>\gamma $ is an arbitrary number, $v\in \mathbb L_1\oplus\mathbb L_2$ is an
arbitrary constant vector, $w\in \mathbb L_5$ is a constant vector which is
to be determined, $\varphi(\cdot)\in C^1([0,\infty) \mapsto \mathbb R^n)$ is an arbitrary
function with the properties $$\varphi(0)=0,\quad \lim_{x\to
\infty}\varphi(x)=-\eta,\quad \lim_{x\to \infty}\varphi'(x)=0,$$ and $y_0(x)$ is a
solution to the problem (\ref{eq:nonhom1})--(\ref{eq:hombc}) with
$$g(x):=f(x)+B(x)\eta -\mathfrak L\varphi(x)+\kappa e^{-\kappa x}Y(x;x_0)w\quad
  \text{when}\;
  x>0.$$
Observe that there exists $\lim_{x\to+0}g(x)$. In virtue of the
Theorem \ref{th:exsolhombvp}, existence of the solution $y_0(x)$ is
guaranteed by the orthogo\-nality conditions which can be given in
the form
\begin{gather*}
\int_{0}^{\infty}\langle[Y^{-1}(s;x_0)]^*P_5b,f(s)+B(s)\eta
-\mathfrak L\varphi(s)\rangle \,ds+w=0,\\
\int_{0}^{\infty}\langle[Y^{-1}(s;x_0)]^*P_6b,f(s)+B(s)\eta
-\mathfrak L\varphi(s)\rangle \,ds=0,\quad \forall b\in \mathbb R^n.
\end{gather*}
Since $\langle[Y^{-1}(s;x_0)]^*P_jb,\varphi(s)\rangle \big|_{s=0}^\infty =0,\;j=5,6$,
and $\mathfrak L^*[Y^{-1}(s;x_0)]^*=0$, these conditions are
equivalent to (\ref{eq:ortcondP5}), (\ref{eq:ortcondP6}). The
orthogonality conditions also imply that
\begin{gather*}
  y_0(x):=\int_{0}^{\infty}\mathfrak G(x,s)g(s)\,ds \\=\int_{0}^{\infty}G(x,s)g(s)\,ds+
  Y(x;x_0)P_1\int_{0}^{\infty}M(s;x_0)g(s)\,ds.
\end{gather*}
The second addend is inessential owing to the presence of an
arbitrary vector $v\in \mathbb L_1\oplus\mathbb L_2$ in the formula
(\ref{eq:y-to-z}).

Next, it is not hard to show that $$\int_{0}^{\infty}G(x,s)\mathfrak
L\varphi(s)\,ds= \varphi(x)-Y(x;x_0)P_1\varphi(x_0)$$ and
\begin{gather*}
\int_{0}^{\infty}G(x,s)\kappa e^{-\kappa s}Y(s;x_0)w\,ds=-e^{-\kappa
x}Y(x;x_0)w.
\end{gather*}
Taking into account these equalities, one can rewrite the formula
(\ref{eq:y-to-z}) in the form (\ref{eq:irs}). Finally, in view of
(\ref{fund}), (\ref{eq:V0}), (\ref{eq:y-to-z}), and the equality
$y_0(+0)=0$ we obtain (\ref{eq:zeta}).

Now, observe that from the definition of $\mathbb L_5$ it follows
that the constant $C_7(x_0):=\max_{x\in [0,x_0]}\|Y(x;x_0)P_5\|$ is
well defined.  Let $\bar{g}(s)$ $:=f(s)+B(s)\eta $. Making use of
\eqref{eq:ortcondP6} and estimates similar to those which were
obtained for the function $J(s;x;x_0)$ in previous section, we have:

1) if $0\le x\le x_0$, then
\begin{gather*}
\int_{0}^{\infty}\bigl\|G(x,s)\bar{g}(s)\bigr\|\,ds \le \sup_{s\in
[0,\infty)}\|\bar{g}(s)\|
\biggl(\int_{0}^{x}\|Y(x;x_0)(P_2+P_3)Y^{-1}(s;x_0)\|\,ds \\ +
\int_{x}^{x_0}\|Y(x;x_0)(P_1+P_4+P_5)Y^{-1}(s;x_0)\|\,ds\\ +
\int_{x_0}^{\infty}\|Y(x;x_0)(P_4+P_5)Y^{-1}(s;x_0)\|\,ds\biggr) \\ \le
\sup_{s\in [0,\infty)}\|\bar{g}(s)\|
\biggl(C_0\int_{0}^{x}\!\!(x/s)^{1-\alpha}\,ds+
C_0\int_{x}^{x_0}\!\!\bigl((x/s)^{1+\alpha}+C_7(x_0)(x_0/s)^{1-\alpha}\bigr)\,ds
\\ +C_*\int_{x_0}^{\infty}\!\!\bigl(C_0\cdot (x/x_0)^{1+\alpha}+C_7(x_0)\bigr)
e^{-\gamma(s-x_0)}\,ds\biggr)\le K_1(\alpha,\gamma,x_0)\sup_{s\in
[0,\infty)}\|\bar{g}(s)\|;
\end{gather*}

2) if $0< x_0< x$, then
\begin{gather*}
\int_{0}^{\infty}\bigl\|G(x,s)\bar{g}(s)\bigr\|\,ds \le \sup_{s\in
[0,\infty)}\|\bar{g}(s)\|
\biggl(\int_{0}^{x_0}\!\!\|Y(x;x_0)(P_2+P_3)Y^{-1}(s;x_0)\|\,ds\\ +
\int_{x_0}^{x}\|Y(x;x_0)(P_1+P_2+P_3)Y^{-1}(s;x_0)\|\,ds\\ +
\int_{x}^{\infty}\|Y(x;x_0)(P_4+P_5)Y^{-1}(s;x_0)\|\,ds\biggr) \\ \le
\sup_{s\in [0,\infty)}\|\bar{g}(s)\|
\biggl(C_0C_*\int_{0}^{x_0}(x_0/s)^{1-\alpha}e^{-\gamma(x-x_0)}\,ds+
C_*\int_{x_0}^{x}e^{-\gamma(x-s)}\,ds + \\ +
C_*\int_{x}^{\infty}e^{-\gamma(s-x)}\,ds\biggr)\le K_1(\alpha,\gamma,x_0)\sup_{s\in
[0,\infty)}\|\bar{g}(s)\|.
\end{gather*}

Finally, the inequality for $\|w\|$ is easily obtained with help of
the estimates from the proof of Proposition \ref{pr:intsol}.
\end{proof}

\textbf{Conclusions.} The results obtained can be interpreted in
terms of linear equations in Banach spaces in the following way. Let
$Y$ be the Banach space of continuous mappings $y(\cdot):[0,\infty)
\mapsto \mathbb R^{n}$ such that $\lim_{x\to +\infty}y(x)=0$, and
$X\subset Y$ be the Banach space of mappings satisfying $y(0)=0$
(these spaces are endowed with usual supremum norm). Consider the
closed linear operator $\mathcal{L}:X \mapsto Y$ defined on the
dense domain $D(\mathcal{L})=\{y(\cdot)\in X\cap C^1([0,\infty)
\mapsto \mathbb R^{n}):\lim_{x\to +\infty}y'(x)=0\}$ by
$\mathcal{L}y(x):=y'(x)-Ay(x)/x-B(x)y(x)$. From Proposition
\ref{pr:famrbs} it follows that the range $R(\mathcal{L})$ is a
closed subspace of Banach space $Y$. Hence, the operator
$\mathcal{L}$ is normally solvable, moreover, it is both $n$-normal
with $n(\mathcal{L})=\dim\ker\mathcal{L}=\dim \mathbb L_1$ and
$d$-normal with
$d(\mathcal{L})=\mathrm{codim}\,R(\mathcal{L})=\dim(\mathbb
L_5+\mathbb L_6)$. This means that we have established conditions
under which the operator $\mathcal{L}$ is a Noetherian one with
index $n(\mathcal{L})-d(\mathcal{L})$.

\end{document}